 \documentclass[11pt]{article}
 \UseRawInputEncoding
\usepackage{amssymb, amsthm, amsmath, amscd}
\setlength{\topmargin}{-45pt} \setlength{\evensidemargin}{0cm}
\setlength{\oddsidemargin}{0cm} \setlength{\textheight}{23.7cm}
\setlength{\textwidth}{16cm}

\usepackage{tikz-cd}

\newtheorem{thm}{Theorem}[section]
\newtheorem{lem}[thm]{Lemma}
\newtheorem{prop}[thm]{Proposition}
\newtheorem{cor}[thm]{Corollary}
\newtheorem{NN}[thm]{}
\theoremstyle{definition}\newtheorem{df}[thm]{Definition}
\theoremstyle{definition}
\theoremstyle{definition}
\newtheorem{nota}[thm]{Notation}

\renewcommand{\phi}{\varphi}

\newcommand{\red}{\textcolor{red}}

\newcommand{\N}{\mathbb{N}}

\newcommand{\R}{\mathbb{R}}
\newcommand{\C}{\mathbb{C}}

\newcommand{\Aff}{\operatorname{Aff}}

\newcommand{\dt}{\delta}
\newcommand{\ep}{\epsilon}
\newcommand{\la}{\langle}
\newcommand{\ra}{\rangle}
\newcommand{\andeqn}{\,\,\,{\rm and}\,\,\,}
\newcommand{\rforal}{\,\,\,{\rm for\,\,\,all}\,\,\,}
\newcommand{\CA}{$C^*$-algebra}
\newcommand{\SCA}{$C^*$-subalgebra}

\newcommand{\af}{{\alpha}}

\newcommand{\beq}{\begin{eqnarray}}
\newcommand{\eneq}{\end{eqnarray}}
\newcommand{\tforal}{\,\,\,\text{for\,\,\,all}\,\,\,}

\newcommand{\Qw}{\overline{QT(A)}^w}
\newcommand{\LAff}{{\rm LAff}}
\newcommand{\cuapprox}{\stackrel{\approx}{\sim}}
\newcommand{\Wlog}{Without loss of generality}
\renewcommand{\phi}{\varphi}

\newcommand{\Her}{\mathrm{Her}}
\newcommand{\Cu}{\mathrm{Cu}}
\newcommand{\diag}{{\rm diag}}
\newcommand{\wilog}{without loss of generality}
\newcommand{\wtd}{\widetilde}
\newcommand{\wh}{\widehat}
\newcommand{\simle}{\stackrel{\sim}{<}}

\title{Strict comparison and stable rank one}
\author{Huaxin Lin\\
 }
\date{}

\begin{document}

\maketitle

\begin{abstract}
Let $A$ be a $\sigma$-unital finite simple \CA\, which 
 has strict comparison property.
  We show that if the canonical map 
$\Gamma$ from the Cuntz semigroup to certain lower semi-continuous affine functions is surjective,
then $A$ has tracial approximate oscillation zero and stable rank one.
Equivalently, if $A$ has an  almost unperforated and almost divisible 
Cuntz semigroup, then $A$ has stable rank one and tracial approximate oscillation zero. 
\end{abstract}

\section{Introduction}
Let ${\cal Z}$ be the Jiang-Su algebra,  a unital separable infinite dimensional 
simple amenable \CA\, with a unique tracial state such that its ordered $K$-theory 
is exactly the same as that of the complex field $\C$ (\cite{JS}). 
A \CA\, $A$ is ${\cal Z}$-stable, if $A$ is isomorphic to $A\otimes {\cal Z}$ as \CA s. 
It turns out  that the only unital  ${\cal Z}$-stable \CA\, in the UCT class which has 
the same properties that ${\cal Z}$ has as mentioned above is ${\cal Z}$ itself
(up to isomorphism).
This is a result of the classification of separable amenable simple \CA s (see  \cite{GLNII} and \cite{EGLN}, for example). 
If $A$ is a separable simple \CA\, with weakly unperforated $K_0(A),$ then the Elliott invariant 
of $A$ and $A\otimes {\cal Z}$ are exactly the same (see \cite{GJS}).  So the current program of classification 
of separable amenable simple \CA s is mainly for those ${\cal Z}$-stable \CA s.

Suppose that $A$ is a unital  separable simple \CA\, with non-trivial 2-quasi-traces. 
Let $a\in  A$ be a positive element, one defines its dimension function 
$\widehat{[a]}(\tau)=d_\tau(a)=\lim_{n\to\infty} \tau(a^{1/n})$ for any 2-quasi-trace $\tau.$
In the case that $A=M_n,$ the algebra of $n\times n$ matrices,
and $\tau$ is the tracial state of $M_n,$ $d_\tau(a)$ is 
just the normalized rank of $a.$
Denote by $QT(A)$ the set of all normalized 2-quasi-traces of $A.$ 
For each $a=(a_{i,j})_{n\times n}\in A\otimes M_n$ and $\tau\in QT(A),$  define $\tau(a):=
(\tau\otimes {\rm Tr})(a)=\sum_{i=1}^n \tau(a_{i,i}),$
where ${\rm Tr}$ is the (non-normalized) trace on $M_n.$ 
We say that $A$ has (Blackadar's) strict comparison (for positive elements), 
if, for any pair of positive elements $a, b\in A\otimes M_r$ (for all $r\in \N$), $a\lesssim b,$ whenever  $d_\tau(a)<d_\tau(b)$
for all $\tau\in QT(A).$  Here $a\lesssim b$ means that there is a sequence of 
elements $x_n\in A\otimes M_r$ such that $a=\lim_{n\to\infty} x_n^*bx_n$ 
(see formal Definition \ref{Dqtr}). Blackadar's strict comparison can also  be defined for 
non-unital simple \CA s.   One of the important features of ${\cal Z}$-stable \CA s is that
they all have strict comparison (see \cite{Ror04JS}).  It is one of the important regularity properties for simple \CA s. 

A part of the Toms-Winter  conjecture states  that the converse also holds for separable amenable 
simple \CA s, i.e., if $A$ is a separable simple amenable
\CA\, with strict comparison, then $A$ is ${\cal Z}$-stable.
  In fact this is the only remaining unsolved part of the Toms-Winter conjecture
  (see \cite{Winter-Z-stable-02}, \cite{MS}, \cite{KR2}, \cite{S-2}, 
\cite{TWW-2}, \cite{Zw}, \cite{Th}, \cite{CETW}, \cite{Linzstable}
and \cite{LinOZ}, for example). 
Return to the \CA\, $A$ mentioned above, let $a, b\in A$ be positive elements.
We write $a\sim b$ (or $a$ and $b$ are  Cuntz equivalent) if $a\lesssim b$ and $b\lesssim a.$ 
The Cuntz equivalence  relation $ ``\sim"$ is an equivalence relation. 
The Cuntz semigroup ${\rm Cu}(A)$ is the equivalence classes of positive elements 
in the stabilization of $A.$
Consider the canonical map $\Gamma: {\rm Cu}(A)\to \LAff_+( QT(A)),$ i.e., 
the set of strictly positive lower semi-continuous affine functions on $QT(A)$
together with the zero function,
defined naturally by $\Gamma([a])(\tau)=d_\tau(a)$ for $\tau\in QT(A)$ and 
$[a]\in {\rm Cu}(A).$  This map  $\Gamma$ can also be defined for non-unital simple \CA s. 
It is shown by Elliott, Robert and Santiago that if $A$ is  a  unital ${\cal Z}$-stable simple \CA,  then the map $\Gamma$ is always 
surjective (a version  of this also holds for non-unital case, see \cite{ERS}, also remark after Definition \ref{Dgamma} below).   Surjectivity of $\Gamma$ may also be regarded as a regularity condition 
for separable simple \CA. 
One may ask the question whether a separable amenable simple 
\CA\, $A$ with strict comparison and surjective  $\Gamma$ is in fact ${\cal Z}$-stable.  

The notion of  stable rank was introduced to \CA\, theory by Marc  Rieffel  in \cite{Rff}.
 A unital \CA\, has stable rank one if 
 its invertible elements are dense in $A.$ 
 The notion plays an important role in the study of simple \CA s
 (for  some earlier work,  see, for example, \cite{Pstr} and \cite{DNNP}). 

M. R\o rdam showed that a unital finite  simple ${\cal Z}$-stable \CA\, has stable rank one (see \cite{Ror04JS}).
L. Robert later showed that any stably projectionless simple ${\cal Z}$-stable \CA\, has almost stable rank one (see \cite{Rlz}).  
Recently,  in \cite{FLL}, it is shown that every finite simple ${\cal Z}$-stable \CA\, actually has stable rank one
(see \cite[Corollary 6.8]{FLL}).
At this moment, we do  not even know, in general,  a separable amenable finite simple 
\CA\, with strict comparison  has stable rank one.  
On the other hand,  it was shown by Thiel in \cite{Th}  that if $A$ is a separable  unital  infinite dimensional simple \CA\, with stable rank one, 
then $\Gamma$ is always surjective. The  unital condition is later removed  (see \cite[Theorem 7.14]{APRT}). 

Let $A$ be a $\sigma$-unital simple
\CA\, with strict comparison which is not purely infinite.
In \cite{FLosc}, we show, among other things,  that, if $A$ also 
has T-tracial approximate oscillation zero{\footnote{
In the case that $A$ has strict comparison, that $A$ has $T$-tracial approximate oscillation zero
is the same as  that $A$ has tracial approximate oscillation zero (see Definition \ref{DOT} and 
\cite[Definition 4.6]{FLosc}).}}, then $A$ has stable 
rank one. In the converse direction, if the canonical map $\Gamma$ is surjective and $A$  has almost stable rank one,
then $A$ has tracial approximate oscillation zero. 
As  a consequence, under the condition that $\Gamma$ is surjective (and $A$ also has strict comparison), 
that $A$ has stable rank one and that $A$ has almost stable rank one are the same.
However, in this paper,  
we now show that, under the same condition that $\Gamma$ is surjective, 
$A$ {\it always} has stable rank one  and tracial approximate oscillation zero.

The main result of this paper may be stated as follows:

\begin{thm}\label{MT}
Let $A$ be a $\sigma$-unital non-elementary simple \CA\, with strict comparison (which is not purely infinite).
Then the following are equivalent:

(1) The  canonical map $\Gamma: \Cu(A)\to {\rm LAff}_+(\wtd{QT}(A))$ 
is surjective. 

(2) $A$ has tracial approximate oscillation zero. 

Moreover, if  (1) or (2) hold, then $A$ has stable rank one. 
\end{thm}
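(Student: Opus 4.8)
The plan is to isolate the single genuinely new implication $(1)\Rightarrow(2)$ and then close the circle with results already available. From \cite{FLosc} we have: (a) if $A$ (which has strict comparison) has tracial approximate oscillation zero, then $A$ has stable rank one; and (b) if $\Gamma$ is surjective and $A$ has almost stable rank one, then $A$ has tracial approximate oscillation zero. From \cite{Th} and \cite[Theorem 7.14]{APRT} we have: (c) if $A$ is non-elementary simple with stable rank one, then $\Gamma$ is surjective. Granting $(1)\Rightarrow(2)$: then $(2)$ yields stable rank one by (a), whence $(1)$ again by (c), so $(2)\Rightarrow(1)$; together with $(1)\Rightarrow(2)$ this gives the equivalence, and the ``moreover'' clause is then immediate since either hypothesis forces $(2)$ and hence stable rank one by (a). (For (c) in the merely $\sigma$-unital case one fixes a strictly positive element $e_A$, passes to separable hereditary subalgebras $\overline{e A e}$ — which inherit simplicity, stable rank one and strict comparison — and recovers $\Gamma$ for $A$ from these by an exhaustion/inductive-limit argument as in \cite{ERS, APRT}.)

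So everything reduces to $(1)\Rightarrow(2)$. Here the plan is to first show that $(1)$, together with strict comparison, forces $A$ to have \emph{almost stable rank one}, and then invoke (b); this upgrades automatically to full stable rank one via (a). To produce almost stable rank one, fix a hereditary subalgebra $B=\overline{aAa}$ and $x\in B$, and aim to approximate $x$ by elements of $B$ that are invertible in $\wtd A$. Using functional calculus and a polar-type decomposition, one separates a ``large'' part (where invertibility is automatic) from a small corner; surjectivity of $\Gamma$ is then used to place inside $\overline{aAa}$ a positive element $b$ whose rank function $\tau\mapsto d_\tau(b)$ is a prescribed lower semicontinuous affine function $f$ on $\wtd{QT}(A)$ with $f(\tau)<d_\tau(a)$ for all $\tau$ — so $\la b\ra\le\la a\ra$ by strict comparison — and with $f$ chosen ``continuous from below'' so that $d_\tau\big((b-\delta)_+\big)\to d_\tau(b)$ uniformly in $\tau$, i.e.\ $\omega(\la b\ra)=0$. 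Inside the corner $\overline{bAb}$, which now has zero oscillation and strict comparison, Cuntz comparison can be realized by honest (near-)equivalences; transporting this back approximates $x$ by invertibles in $\wtd A$.

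The main obstacle is exactly this bridge: surjectivity of $\Gamma$ is a statement purely about the rank-at-traces invariant, whereas almost stable rank one (equivalently, tracial oscillation zero) concerns the internal approximation structure of $A$ — sharpness of spectral projections, density of invertibles in corners. The nontrivial point is that a preimage $\la b\ra$ of a well-chosen lower semicontinuous affine function can be taken to have \emph{zero} oscillation, which is false for an arbitrary preimage; this requires exploiting the fine structure of $\Cu(A)$, which under $(1)$ plus strict comparison is almost unperforated and almost divisible, to ``halve'' Cuntz classes and build rapidly increasing sequences $\la b_n\ra\ll\la b_{n+1}\ra$ with $\sup_n d_\tau(b_n)=d_\tau(b)$ uniformly in $\tau$, and then using strict comparison to convert the resulting inequalities of affine functions into Cuntz subequivalences realized inside $\overline{aAa}$. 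Throughout, a recurring technical burden is the non-unital, possibly non-compact setting: $QT(A)$ need not be compact, so one works with the cone $\wtd{QT}(A)$ of densely defined lower semicontinuous $2$-quasitraces with the base determined by $e_A$, and checks that the affine-function manipulations, the applications of strict comparison, and the use of $\Gamma$ on all of $\Cu(A)$ (including classes of infinite rank) remain valid there, as in \cite{ERS, FLosc, APRT}.
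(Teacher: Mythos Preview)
Your reduction to the single implication $(1)\Rightarrow(2)$ is correct, and you are right that the crux is to manufacture, from surjectivity of $\Gamma$, positive elements with continuous rank function (i.e.\ $\omega=0$). But the route you propose --- first establish almost stable rank one and then quote (b) from \cite{FLosc} --- has a genuine gap at the step ``inside the corner $\overline{bAb}$, which now has zero oscillation \dots\ Cuntz comparison can be realized by honest (near-)equivalences.'' Having $\omega(b)=0$ is a statement about the single element $b$ (continuity of $\tau\mapsto d_\tau(b)$); it confers no structural property on the hereditary subalgebra $\Her(b)$, whose other elements may well have discontinuous rank functions. So nothing you have written explains why invertible approximation becomes available in $\Her(b)$, and the passage from a zero-oscillation element to almost stable rank one is precisely the missing idea. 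In effect you have replaced the target $(1)\Rightarrow(2)$ by the at-least-as-hard target $(1)\Rightarrow\text{almost stable rank one}$, and then left that unproved.

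The paper avoids this detour entirely and proves $(2)$ directly. The new ingredient is a comparison theorem (Theorem~\ref{Tcomplessapprox}): if $\omega(a)=0$ and $d_\tau(a)<d_\tau(b)$ for all $\tau$, then $a\simle b$, i.e.\ there exists $x$ with $x^*x=a$ and $xx^*\in\Her(b)$. This is used iteratively (Lemmas~\ref{Lorth} and \ref{Ldig}): writing $\widehat{[e]}=\sum_n h_n$ with each $h_n\in\Aff_+(\Qw)$ (Proposition~\ref{Psincrease}), surjectivity of $\Gamma$ supplies $a_n$ with $\widehat{[a_n]}=h_n$ and hence $\omega(a_n)=0$; Theorem~\ref{Tcomplessapprox} then \emph{embeds} these one-by-one into mutually orthogonal pieces $b_n$ inside $\Her(e)$, each with small oscillation. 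Their sum $e_0=\sum b_n/n\in\Her(e)$ carries an approximate identity $\{e_{0,n}\}$ with $\omega(e_{0,n})<\ep$ and $\widehat{e_{0,n}}\nearrow\widehat{[e_0]}$; finally $b=a^{1/2}e_{0,n_0}a^{1/2}$ approximates $a$ in $\|\cdot\|_{_{2,\Qw}}$ with $\omega(b)<\ep$. This gives $\Omega^T(a)=0$ for every $a$, hence (2). Stable rank one then follows from \cite[Theorem~9.4]{FLosc}, and $(2)\Rightarrow(1)$ is \cite[Theorem~7.11]{FLosc} --- which is already stated for $\sigma$-unital $A$, so your separable-exhaustion workaround via \cite{Th,APRT} is unnecessary here (those references are used only for Corollary~\ref{CM}, where separability is assumed).
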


Combining with  \cite{Th} and \cite[Theorem 7.14]{APRT} 
(which show that if $A$ is a separable simple \CA\, with  stable rank one then $\Gamma$ is surjective)
as well as \cite[Theorem 1.1]{FLosc},  we obtain the following:

\begin{cor}\label{CM}
Let $A$ be a separable non-elementary  simple \CA\, with strict comparison (which is not purely infinite).
Then the following are equivalent:

(1) The  canonical map $\Gamma: \Cu(A)\to {\rm LAff}_+(\wtd{QT}(A))$ 
is surjective. 

(2) $A$ has tracial approximate oscillation zero. 

(3) $A$ has stable rank one.

(4) $A$ has property (TM) {\rm (see  \cite[Theorem 1.1]{FLosc}).}

\end{cor}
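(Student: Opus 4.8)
The substance of the theorem is the equivalence $(1)\Leftrightarrow(2)$; the ``Moreover'' clause is then immediate. Indeed, once $(2)$ is available, $A$ has stable rank one by \cite[Theorem 1.1]{FLosc}, using that for an algebra with strict comparison $T$-tracial approximate oscillation zero and tracial approximate oscillation zero agree. The new point, relative to \cite{FLosc}, is that we must prove $(1)\Rightarrow(2)$ \emph{without} assuming almost stable rank one; note that $(2)$ itself gives stable rank one, hence almost stable rank one, so the pair $(1)$ and $(2)$ together would land us back in the setting already handled in \cite{FLosc}, and the whole issue is to bootstrap from $(1)$ alone.

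For $(2)\Rightarrow(1)$ I would first show that, in the presence of strict comparison, tracial approximate oscillation zero forces $\Cu(A)$ to be almost divisible: from a tracial-oscillation-zero approximant $b\lesssim a$ one sees that, as far as dimension functions are concerned, $b$ behaves like a purely positive element, and such elements can be subdivided up to Cuntz subequivalence with the help of almost unperforation. With $\Cu(A)$ now almost unperforated and almost divisible, one recovers surjectivity of $\Gamma$ from the structural description of the Cuntz semigroup of a $\sigma$-unital simple \CA\, with these two regularity properties (see, e.g., \cite{APRT}): given $f\in\LAff_+(\wtd{QT}(A))$ one writes $f=\sup_n f_n$ with $(f_n)$ rapidly increasing, realizes each $f_n$ up to a controlled error as $\Gamma(\langle c_n\rangle)$ using almost divisibility and strict comparison, arranges $\langle c_n\rangle$ to be increasing, and passes to $\langle c\rangle=\sup_n\langle c_n\rangle$ in $\Cu(A)$, using that $\Gamma$ preserves suprema of increasing sequences to get $\Gamma(\langle c\rangle)=f$.

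For $(1)\Rightarrow(2)$ --- the main work --- fix $a$ in the Pedersen ideal of $A\otimes\mathcal K$ (the general case reduces to this) and $\epsilon>0$. Since $\tau\mapsto d_\tau((a-\epsilon/2)_+)$ is a strictly positive lower semicontinuous affine function on $\wtd{QT}(A)$, surjectivity of $\Gamma$ produces $c$ with $d_\tau(c)=d_\tau((a-\epsilon/2)_+)$ for every $\tau$. I would then check, after suitable small further cut-downs so that the relevant comparisons of dimension functions become strict, that $(a-\epsilon)_+\lesssim c\lesssim a$ by strict comparison and simplicity; and, crucially, that $c$ (equivalently, the hereditary subalgebra it generates) has oscillation zero, because its rank function is realized \emph{exactly} as a single element of the image of $\Gamma$, so that the rank functions of the cut-downs $(c-\delta)_+$ exhaust $d_\tau(c)$ from below in a way that can be made uniform in $\tau$. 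Carrying this out for all $a$ and $\epsilon$ yields tracial approximate oscillation zero in the sense of Definition \ref{DOT}, and then stable rank one follows as in the first paragraph.

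The main obstacle is precisely the uniformity in the previous paragraph: turning surjectivity of $\Gamma$ --- an existence statement about elements with prescribed rank --- into the estimate $\inf_{\delta>0}\sup_{\tau\in\wtd{QT}(A)}\big(d_\tau(c)-d_\tau((c-\delta)_+)\big)=0$, and doing so without any a priori regularity such as (almost) stable rank one, since removing that hypothesis is the entire improvement over \cite{FLosc}. I expect this to require a delicate interplay between the lower semicontinuity and affineness of rank functions, the simplicity and strict comparison of $A$, and a compactness argument on $\wtd{QT}(A)$ to upgrade the pointwise convergence $d_\tau((c-\delta)_+)\to d_\tau(c)$ to the uniform control demanded by the definition of oscillation.
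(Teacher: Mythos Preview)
Your proposal correctly identifies that the heart of the matter is $(1)\Rightarrow(2)$, but the argument you sketch for it has a genuine gap at exactly the point you yourself flag as the ``main obstacle.'' You produce $c$ with $d_\tau(c)=d_\tau((a-\epsilon/2)_+)$ and then assert that $c$ has oscillation zero ``because its rank function is realized exactly as a single element of the image of $\Gamma$.'' But \emph{every} positive element has its rank function realized by $\Gamma$; this is tautological. The function $\tau\mapsto d_\tau((a-\epsilon/2)_+)$ is merely lower semicontinuous, and $\omega(c)=0$ would require it to be continuous. Surjectivity of $\Gamma$ gives no control over which preimage you land on, so there is no reason the $c$ you obtain has small oscillation---indeed, you could have taken $c=(a-\epsilon/2)_+$ itself, which shows nothing has been gained. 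Your final paragraph concedes you do not know how to close this; as written, the proposal is a restatement of the difficulty rather than a resolution of it.

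The paper's route is structurally different. Instead of seeking a single element with small oscillation realizing a prescribed (possibly discontinuous) rank function, it decomposes $\widehat{[e]}$ as a pointwise sum $\sum_n h_n$ of \emph{continuous} strictly positive affine functions (Proposition~\ref{Psincrease}). Surjectivity of $\Gamma$ then yields $a_n$ with $\widehat{[a_n]}=h_n$, and these automatically satisfy $\omega(a_n)=0$. The delicate step is to place suitable cut-downs of the $a_n$ as mutually orthogonal elements $b_n$ \emph{inside} $\Her(e)$ (Lemmas~\ref{Lorth} and~\ref{Ldig}); this requires the new comparison result Theorem~\ref{Tcomplessapprox} (that $\omega(a)=0$ and $d_\tau(a)<d_\tau(b)$ give $a\simle b$, not merely $a\lesssim b$), which is the content of Section~3. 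The partial sums of $e_0=\sum_n b_n/n\in\Her(e)$ then have uniformly small oscillation, which feeds into Theorem~\ref{TT}.

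Two smaller points. First, you refer to a ``Moreover'' clause, but that is Theorem~\ref{MT}; the present corollary has four equivalent conditions, and you neither address~(4) nor close the cycle through~(3) (in the paper, $(3)\Rightarrow(1)$ is imported from \cite{Th} and \cite[Theorem~7.14]{APRT}, and the link to~(4) from \cite[Theorem~1.1]{FLosc}). Second, your route for $(2)\Rightarrow(1)$ via almost divisibility is plausible but unnecessary here: the paper simply invokes \cite[Theorem~7.11]{FLosc}.
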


We also have the following:

\begin{cor}\label{CM2}
Let $A$ be a non-elementary  finite separable simple \CA\, with 
almost unperforated and almost divisible ${\rm Cu}(A).$
Then $A$ has stable rank one and tracial approximate oscillation zero
\noindent
{\rm (see also Corollary \ref{Cpure}).} 
\end{cor}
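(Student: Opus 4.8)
The plan is to derive Corollary \ref{CM2} as a special case of Corollary \ref{CM}, so the task reduces to translating the Cuntz-semigroup regularity hypotheses (almost unperforated and almost divisible $\Cu(A)$) into the two hypotheses of Corollary \ref{CM}, namely strict comparison and surjectivity of $\Gamma$, together with checking that $A$ is neither elementary nor purely infinite. First I would recall that almost unperforatedness of $\Cu(A)$ is, by the theorem of R\o rdam (and its refinements), precisely equivalent to Blackadar's strict comparison of positive elements for a simple \CA; this gives hypothesis (1)-input directly. The finiteness assumption rules out the purely infinite case (a purely infinite simple \CA\ is not finite), and non-elementarity is assumed outright, so the standing hypotheses of Corollary \ref{CM} are met once surjectivity of $\Gamma$ is established.

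The heart of the matter is therefore to show that if $\Cu(A)$ is almost unperforated and almost divisible, then the canonical map $\Gamma\colon \Cu(A)\to \mathrm{LAff}_+(\wtd{QT}(A))$ is surjective. This is where I expect the real work to be, though it is a known-type argument: almost divisibility lets one, given a lower semi-continuous affine function $f$ on $\wtd{QT}(A)$, approximate $f$ from below by suprema of sequences of elements of $\Cu(A)$ whose dimension functions increase to $f$, using that one can "divide" Cuntz classes into $n$ roughly equal pieces up to a controlled error; almost unperforatedness (i.e. strict comparison) then guarantees that the order on these classes is detected by the dimension functions $d_\tau$, so the sup in $\Cu(A)$ exists and maps onto $f$. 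Concretely I would first handle $f$ that are "rank functions of hypothetical elements" built as increasing limits of step functions with rational coefficients, realize each step as an actual Cuntz class via almost divisibility, verify the compatibility of these classes via strict comparison, and pass to the supremum in $\Cu(A)$ (which is a complete ordered semigroup). One should be slightly careful that $\wtd{QT}(A)$ rather than $QT(A)$ is the relevant base space in the non-unital setting, and that $\mathrm{LAff}_+$ here means the strictly positive lower semi-continuous affine functions — so the edge cases ($f$ taking the value $0$ or $\infty$ somewhere, and behavior at the cone point) need the usual bookkeeping.

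With $\Gamma$ surjective and strict comparison in hand, Corollary \ref{CM} applies verbatim and yields that $A$ has stable rank one and tracial approximate oscillation zero, which is the assertion. I would phrase the proof as: ``Since $\Cu(A)$ is almost unperforated, $A$ has strict comparison; since $A$ is finite it is not purely infinite, and it is non-elementary by hypothesis. By [the standard equivalence recalled above, together with almost divisibility], $\Gamma$ is surjective. Now apply Corollary \ref{CM}.'' The main obstacle is genuinely the surjectivity-of-$\Gamma$ step — packaging the almost-divisibility-plus-almost-unperforatedness argument cleanly — and if the paper has already recorded the implication ``almost unperforated $+$ almost divisible $\Rightarrow$ $\Gamma$ surjective'' for simple \CA s (or cites it), then this corollary is essentially immediate and I would simply invoke that together with Corollary \ref{CM}. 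I would also cross-reference Corollary \ref{Cpure}, as the statement itself suggests, in case the purely infinite / general (not-necessarily-finite) case is treated there and the finiteness hypothesis is what cleanly isolates the present assertion.
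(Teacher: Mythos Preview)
Your proposal is correct and follows essentially the same route as the paper: the paper packages the implication ``almost unperforated $+$ almost divisible $\Rightarrow$ strict comparison and $\Gamma$ surjective'' as Proposition~\ref{PCuntz} (citing R{\o}rdam for strict comparison and Robert for surjectivity of $\Gamma$), notes that finiteness precludes pure infiniteness, and then invokes Corollary~\ref{CM} (via Corollary~\ref{Cpure}). The only small point you do not mention explicitly is the verification that $\wtd{QT}(A)\neq\{0\}$, which the paper obtains from $A$ not being purely infinite via \cite[Corollary~5.1]{Ror04JS}.
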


This paper is organized as follows.  Section 2  is a preliminary. 
In Section 3,  we consider the following question:
Let $A$ be a $\sigma$-unital simple \CA\, with strict comparison.
Suppose that $a, b\in A_+$ and $d_\tau(a)<d_\tau(b)$ for all nonzero 
2-quasi-traces $\tau.$ Does it imply that there is $x\in A$ such 
that $x^*x=a$ and $xx^*\in \overline{bAb}\,?$  We  show that the answer is yes if $d_\tau(a)$ 
is continuous on $\wtd{QT}(A)$
(see Theorem \ref{Tcomplessapprox}).  This result will be used in Section 4. 
In Section 4,  we show that if $A$ is a $\sigma$-unital simple \CA\, with strict comparison and 
surjective $\Gamma,$ then $A$ has tracial approximate oscillation zero and stable rank one. We also 
give the proof of Theorem \ref{MT} and its corollaries.  

 \vspace{0.2in}
 
{\bf Acknowledgments}
The author was partially supported by a NSF grant (DMS-1954600). He would like to acknowledge the support during his visits
to the Research Center of Operator Algebras at East China Normal University
which is partially supported by Shanghai Key Laboratory of PMMP, Science and Technology Commission of Shanghai Municipality ( 
\#22DZ2229014).

%

\section{Preliminaries}

\begin{df}\label{D1}
Let $A$ be a \CA.
Denote by $A^{\bf 1}$ the closed unit ball of $A,$  and 
by $A_+$ the set of all positive elements in $A.$
Put $A_+^{\bf 1}:=A_+\cap A^{\bf 1}.$
Denote by $\wtd A$ the minimal unitization of $A.$ Let $S\subset A$   be a subset of $A.$
Denote by
${\rm Her}(S)$
the hereditary $C^*$-subalgebra of $A$ generated by $S.$

 Let  ${\rm Ped}(A)$ denote
the Pedersen ideal of $A,$ ${\rm Ped}(A)_+:= {\rm Ped}(A)\cap A_+$
and ${\rm Ped}(A)_+^{\bf 1}={\rm Ped}(A)\cap A_+^{\bf 1}.$ 

Unless otherwise stated (only for Pedersen ideals), an ideal of a \CA\, is {\it always} a closed and  two-sided ideal.
\end{df}

\begin{nota}
Throughout the  paper,
the set of all positive integers is denoted by $\N.$ 
%
Let  $A$ 
be a normed space and ${\cal F}\subset A$ be a subset. For any  $\epsilon>0$ and 
$a,b\in A,$
we  write $a\approx_{\epsilon}b$ if
$\|a-b\|< \epsilon$.
We write $a\in_\ep{\cal F}$ if there is $x\in{\cal F}$ such that
$a\approx_\ep x.$


\end{nota}

\begin{df}
Recall that a \CA\, $A$ has stable rank one if  invertible elements 
of $\wtd A$ are dense in $\wtd A.$ 
\end{df}

\begin{df}\label{Dcuntz}
Denote by ${\cal K}$ the \CA\, of compact operators on $l^2$ and 
by $\{e_{i,j}\}$ a system of matrix unit for ${\cal K}.$ 
Let $A$ be a \CA. 
Consider $A\otimes {\cal K}.$  In  what follows, we identify $A$ with $A\otimes e_{1,1}$ 
as a hereditary \SCA\, of $A\otimes {\cal K},$ whenever it is convenient. 
Let 
$a,\, b\in (A\otimes {\cal K})_+.$ 
We write $a\vspace{0.02in}\simle b,$ if there exists $x\in A\otimes {\cal K}$ such that
$x^*x=a$ and $xx^*\in \Her(b)=\overline{b(A\otimes {\cal K})b}.$

We 
write $a \lesssim b$ if there is 
$x_i\in A\otimes {\cal K}$  for all $i\in \N$ 
such that
$\lim_{i\rightarrow\infty}\|a-x_i^*bx_i\|=0$.
We write $a \sim b$ if $a \lesssim b$ and $b \lesssim a$ both  hold.
The Cuntz relation $\sim$ is an equivalence relation.
Set $\Cu(A)=(A\otimes {\cal K})_+/\sim.$
Let $[a]$ denote the equivalence class of $a$. 
We write $[a]\leq [b] $ if $a \lesssim  b$.
\end{df}

\begin{df}\label{Dff}
For $\ep>0,$ define a continuous function $f_\ep\in C([0, \infty))$
as follows: $0\le f_\ep(t)\le 1;$
$f_\ep(t)=0$ if $t\in [0,\ep/2],$ $f_\ep(t)=1$ if $t\in [\ep, \infty)$ and 
$f_\ep(t)$ is linear in $[\ep/2, \ep].$
\end{df}

\begin{df}\label{Dqtr}
{{Let $A$ be a $\sigma$-unital \CA. 
A densely  defined 2-quasi-trace  is a 2-quasi-trace defined on ${\rm Ped}(A)$ (see  Definition II.1.1 of \cite{BH}). 
Denote by ${\widetilde{QT}}(A)$ the set of densely defined quasi-traces 
on 
$A\otimes {\cal K}.$  
%
 Recall that we identify 
$A$ with $A\otimes e_{1,1}.$
Let $\tau\in {\widetilde{QT}}(A).$  Note that $\tau(a)\not=\infty$ for any $a\in {\rm Ped}(A)_+\setminus \{0\}.$

We endow ${\widetilde{QT}}(A)$ 
{{with}} the topology  in which a net 
${{\{}}\tau_i{{\}}}$ 
 converges to $\tau$ if 
${{\{}}\tau_i(a){{\}}}$ 
 converges to $\tau(a)$ for all $a\in 
 {\rm Ped}(A)$ 
 (see also (4.1) on page 985 of \cite{ERS}).
  Denote by $QT(A)$ the set of normalized 2-quasi-traces of $A$ ($\|\tau|_A\|=1$).

}}

Note that, for each $a\in ({{A}}
\otimes {\cal K})_+$ and $\ep>0,$ $f_\ep(a)\in {\rm Ped}(A\otimes {\cal K})_+.$ 
Define 
\beq
\widehat{[a]}(\tau):=d_\tau(a)=\lim_{\ep\to 0}\tau(f_\ep(a))\rforal \tau\in {\widetilde{QT}}(A).
\eneq

Let $A$ be a simple \CA\,with $\wtd{QT}(A)\setminus \{0\}\not=\emptyset.$
Then
$A$
is said to have (Blackadar's) strict comparison, if, for any $a, b\in (A\otimes {\cal K})_+,$ 
condition 
\beq
d_\tau(a)<d_\tau(b)\rforal \tau\in {\widetilde{QT}}(A)\setminus \{0\}
\eneq
implies that 
$a\lesssim b$ (see the paragraph after  Definition  \ref{DpureW}). 
%
%
%
%
\end{df}

\begin{df}\label{DGamma}
Let $A$ be a \CA\, with ${\wtd{QT}}(A)\setminus\{0\}\not=\emptyset.$
Let $L({\wtd{QT}}(A))$ be the set of all continuous real  functions 
on ${\wtd{QT}}(A)$ 
such that $f(t+\tau)=f(t)+f(\tau)$ and $f(s\cdot \tau)=sf(\tau)$
for all $s\in \R_+$ and $t, \tau\in {\wtd{QT}}(A).$  Note that if $f\in L({\wtd{QT}}(A))$ then 
$f(0)=0.$
Let $S\subset {\wtd{QT}}(A)$ be a convex subset. 
Set 
\beq
\Aff_+(S)&=&\{f\in L({\wtd{QT}}(A)): f \,\,{\rm affine},
f(s)>0\,\,{\rm for}\,\,s\not=0,\,\,\}\cup \{0\},\\
%
{\rm LAff}_+(S)&=&\{f:S\to [0,\infty]: \exists \{f_n\}, f_n\nearrow f,\,\,
 f_n\in \Aff_+(S)\}.
 \eneq
Note that if $0\in S,$ $f(0)=0.$
For a  simple \CA\, $A$ and   each $a\in (A\otimes {\cal K})_+,$ the function $\hat{a}(\tau)=\tau(a)$ ($\tau\in S$) 
is in general in ${\rm LAff}_+(S).$   If $a\in {\rm Ped}(A\otimes {\cal K})_+,$
then $\wh{a}\in \Aff_+(S).$
For 
$\widehat{[a]}(\tau)=d_\tau(a)$ defined above,   we have 
$\widehat{[a]}\in {\rm LAff}_+({\wtd{QT}}(A)).$  Note that 
$\wh{a}$ and $\wh{[a]}$ are not the same in general.
\end{df}

\begin{df}\label{Dgamma}
Let   $\Gamma: \Cu(A)\to {\rm LAff}_+({\wtd{QT}}(A))$ be 
the canonical map defined by $\Gamma([a])(\tau)=\widehat{[a]}(\tau)=d_\tau(a)$ 
for all $\tau\in {\wtd{QT}}(A).$
\end{df}

It is helpful to notice that, when $A$ is exact, every 2-quasi-trace is a trace (\cite{Haagtrace}). 
By \cite[Theorem 6.6 and 4.4]{ERS}, if $A$ is ${\cal Z}$-stable, then $\Gamma$ is surjective.

(1) In the case that $A$ is  algebraically simple (i.e., $A$ is a simple \CA\,  and $A={\rm Ped}(A)$), 
$\Gamma$ also induces a canonical map 
$\Gamma_1: \Cu(A)\to {\rm LAff}_+(\Qw),$
where $\Qw$ is the weak*-closure of $QT(A).$  Since, in this case, 
$\R_+\cdot \Qw={\wtd{QT}}(A),$ the map $\Gamma$ is surjective if and only if $\Gamma_1$
is surjective.  We would like to point out that, in this case, $0\not\in \Qw$ (see Lemma 4.5 of \cite{eglnp}
and Proposition 2.9 of \cite{FLosc}).

(2) In the case that $A$ is stably finite and simple,
denote by $\Cu(A)_+$ the set of purely non-compact elements (see Proposition 6.4 of \cite{ERS}).
Suppose that $\Gamma$ is surjective.  Then  $\Gamma|_{\Cu(A)_+}$ is surjective as well  (see 
Theorem 6.6 of \cite{ERS} and the remark (2) after Definition 2.13 of \cite{FLosc}).

(3) Let $V(A)$ be the sub-semigroup of ${\rm Cu}(A)$ consisting 
of those elements which can be represented by projections in ${\rm Ped}(A\otimes {\cal K}).$
If $A$ has strict comparison and $\Gamma$ is surjective, then one has a nice description 
of ${\rm Cu}(A):$ ${\rm Cu}(A)=V(A)\sqcup ({\rm LAff}_+(\wtd{QT}(A))\setminus \{0\})$ 
(see \cite[Theorem 6.2]{TTm}), where the order and the mixed addition are  defined in 
the paragraph above Theorem 6.2 of   \cite{TTm} (see also subsection 6.1 of \cite{Rl}). 
To be more precise, $[p]+f:=\Gamma([p])+f;$ $[p]\le f$ if and only if 
$\tau(p)<f(\tau)$ for all $\tau\in \wtd{QT}(A)\setminus \{0\},$ while $f\le [p]$ if and only 
if $f(\tau)\le \tau(p)$ for all $\tau\in \wtd{QT}(A)$ (assuming $p\in A\otimes {\cal K}$ is a projection).
It should also be added that, if $p, q\in A\otimes {\cal K}$ are projections 
such that $\tau(p)<\tau(q)$ for all $\tau\in \wtd{QT}(A)\setminus \{0\},$ 
then $h:=\wh{[q]}-\wh{[p]}\in\Aff_+(\wtd{QT}(A))\setminus \{0\}.$ 
Hence $[p]\le [p]+(1/2)h\le [q].$   If ${\rm Cu}(A)=V(A)\sqcup ({\rm LAff}_+(\wtd{QT}(A))\setminus \{0\})$
as above,  then 
$A$ has strict comparison.
If, in addition,  $A={\rm Ped}(A),$ 
one may write  ${\rm Cu}(A)=V(A)\sqcup ({\rm LAff}_+(\Qw)\setminus \{0\}).$


%
%
%

\begin{df}\label{Dalmostunp}
Let $A$ be a \CA\, and ${\rm Cu}(A)$ its Cuntz semigroup. 
If $x, y\in {\rm Cu}(A),$ we write $x\ll y,$ if   whenever $y\le \sup_n y_n$
for some increasing sequence $\{y_n\},$ then there exists $n_0\in \N$ such  that $x\le y_{n_0}.$

We say ${\rm Cu}(A)$ is almost unperforated if $(k+1) x\le ky$ 
implies $x\le y$ for all $x, y\in S$ and $k\in \N$ (see \cite[Definition 3.1]{Ror04JS}).

We say ${\rm Cu}(A)$ is almost divisible, if for all $x, y\in {\rm Cu}(A)$ with 
$x\ll y,$ and $n\in \N,$ there exists $z\in {\rm Cu}(A)$ such that $nz\le y$ and $x\le (n+1)z$
(see Property (D) in the proof of \cite[Proposition 6.2.1]{Rl}). 
\end{df}

\begin{df}\label{DpureW}
Recall that a \CA\, is said to be {\it pure} if ${\rm Cu}(A)$ is almost unperforated and almost divisible 
(\cite[Definition 3.6 (i)]{Winter-Z-stable-02} and \cite[Subsection 6.3]{RS}). 
\end{df}

Suppose that $A$ is a $\sigma$-unital simple \CA\, 
with $\wtd{QT}(A)=\{0\}.$  Given  any $a, b\in (A\otimes {\cal K})_+\setminus \{0\}.$
Then, one always has  $d_\tau(a)<d_\tau(b)$ for all $\tau\in \wtd{QT}(A)\setminus \{0\}=\emptyset.$
One may assume that $A$ has strict comparison means that $a\lesssim b$ 
in this case.  This implies that
$A$ is purely infinite.  However, one may also exclude the case that $A$ is purely infinite
in the next statement. 

The following  proposition is a combination of  results of R\o rdam, Elliott-Robert-Santiago, Robert, 
 Tikuisis-Toms   and Winter, and others
(of some earlier versions).
We state here for  convenience and some clarification (cf. \cite{Ror04JS}, \cite{ERS}, \cite{Winter-Z-stable-02} and 
\cite{TTm} and \cite{BH}) for the later statements.

\begin{thm}\label{PCuntz}
Let $A$ be a separable simple \CA\,
Then the following are equivalent:

(1) $A$ has strict comparison and the canonical  map $\Gamma$ is surjective;

(2)  ${\rm Cu}(A)=V(A)\sqcup ( {\rm LAff}_+(\wtd{QT}(A))\setminus \{0\});$

(3) ${\rm Cu}(A)$ is almost unperforated and almost divisible, and

(4) $A$ is pure. 
\end{thm}

\begin{proof}
 (1) $\Leftrightarrow$ (2): Note that, as mentioned in comment (3) after Definition \ref{Dgamma},   (1) $\Rightarrow$ (2)  follows from the same proof of \cite[Theorem 6.2]{TTm}. 
 That (2) $\Rightarrow$ (1)  follows from the definition of the order on $V(A)\sqcup
 ({\rm LAff}_+(\wtd{QT}(A))\setminus \{0\}$  (see (3) after Definition \ref{Dgamma}). 
 
 (3) $\Leftrightarrow$ (4) follows from the definition. 
 
 To see (2) $\Rightarrow$ (3), let  $a, b\in (A\otimes {\cal K})_+$   be
 such that $(k+1)[a]\le k [b]$  for some $k\in \N.$  Then $\wh{[a]}(\tau)<\wh{[b]}(\tau)$ for 
 all $\tau\in \wtd{QT}(A)\setminus \{0\}.$ It follows that $[a]\le [b].$ So ${\rm Cu}(A)$ is almost unperforated.
 
 To see that ${\rm Cu}(A)$ is almost divisible, 
 let $x, y\in {\rm Cu}(A)$ and $x\ll y.$  If $x\in {\rm LAff}_+(\wtd{QT}(A)),$ then, for any $n\in \N,$
 choose $z=(1/n)x.$ Then $nz\le y$ and $x\le (n+1)z.$
 If $x=[p]$ for some projection $p,$ then $\Gamma([p])\in \Aff_+(\wtd{QT}(A)).$
 The assumption that $[p]\ll y$ implies that $\wh{y}(\tau)-\wh{[p]}\in \Aff_+(\wtd{QT}(A))\setminus \{0\}.$
 Put $h=(1/2)(\wh{y}(\tau)-\wh{[p]}).$ 
 Then $x+h\le y.$ Now $x+h\in {\rm LAff}_+(\wtd{QT}(A))\setminus \{0\}.$ Choose $z=(1/n)(x+h).$
 Then $nz\le y$ and $x\le (n+1)z.$

 For  (3) $\Rightarrow$ (1),   
 we first note that $A$ has strict comparison if ${\rm Cu}(A)$ is almost unperforated. 
 This follows from \cite[Proposition 3.2]{Ror04JS}{\footnote{Strictly  speaking \cite[Proposition 3.2]{Ror04JS}
 implies that strict comparison for elements in $\cup_n M_n(A)_+.$ 
 Let $a, b\in (A\otimes {\cal K})_+$ such that $d_\tau(a)<d_\tau(b)$ for all 
 $\tau\in \wtd{QT}(A)\setminus \{0\}.$ Let $\ep>0$
and $\{e_n\}$ be an approximate identity for $A\otimes {\cal K}$ such that 
$e_n\in M_n(A),$ $n\in \N.$ Since $\wh{[f_\ep(a)]}\le \wh{f_{\ep/2}(a)}$ and the latter is continuous,
for some $m\in \N,$ $\wh{[f_\ep(a)]}<[b^{1/2} e_mb{\wh{^{1/2}]\,\,}}$  on $\wtd{QT}(A)\setminus \{0\}.$
Then $[e_n^{1/2}f_{\ep}(a)^{1/2}{\wh{e_n^{1/2}]\,\,}}<[e_m^{1/2}b\wh{e_m^{1/2}]\,\,}$ on $\wtd{QT}(A)\setminus \{0\}.$
Hence $e_n^{1/2}f_\ep(a)e_n^{1/2}\lesssim e_m^{1/2} b e_m^{1/2}\sim b^{1/2} e_m b^{1/2}.$ 
It follows that there are $x_n\in A\otimes {\cal K}$ such that
$\|x_n^*x_n-e_n^{1/2}f_\ep(a)e_n^{1/2}\|<1/2^n$ and $x_nx_n^*\in \Her(b^{1/2} e_m b^{1/2})\subset \Her(b).$
 Since $\lim_{n\to\infty}\|e_n^{1/2}f_\ep(a)e_n^{1/n}-f_\ep(a)\|=0,$ we conclude that
 $\lim_{n\to\infty}x_n^*x_n=f_\ep(a).$ Thus $f_\ep(a)\lesssim b$ for any $\ep>0.$ Hence $a\lesssim b.$}}
 (see also \cite[Theorem 6.6]{ERS}). 

 Since $A$ now has strict comparison and 
 ${\rm Cu}(A)$ is almost divisible, by \cite[Corollary 5.8]{Rl},  $\Gamma$ is surjective (see also the second 
 paragraph of the proof of \cite[Proposition 6.2.1]{Rl}). 
 This proves (3) $\Rightarrow$ (1).
 %
%
\end{proof}

\begin{df}\label{Domega}
Let $A$ be a \CA\, with ${\widetilde{QT}}(A)\setminus \{0\}\not=\emptyset.$ 
Let $S\subset {\widetilde{QT}}(A)$
be a compact subset. 
Define, for each $a\in (A\otimes {\cal K})_+,$
\beq
\omega(a)|_S&=&\lim_{n\to\infty}\sup\{d_\tau(a)-\tau(f_{1/n}(a)): \tau\in S\}
\eneq
(see \cite[Definition 4.1]{FLosc} and A1 of \cite{eglnkk0}).  We will assume that $A$ is algebraically simple  and 
only consider the case that $S=\Qw,$ and in this case, we will write 
$\omega(a)$ instead of $\omega(a)|_{\Qw},$ in this paper.  It should be mentioned 
that $\omega(a)=0$ if and only if $d_\tau(a)$ is continuous on $\Qw.$

If $A$ is a $\sigma$-unital simple \CA,  choose $e\in {\rm Ped}(A)_+\setminus \{0\}.$
Then $\Her(e)$ is algebraically simple.  Since $A\otimes {\cal K}\cong \Her(e)\otimes {\cal K},$
we may write $\omega(a)=0$ if $\omega(a)|_S=0,$ where $S=\overline{QT(\Her(e))}^w.$
As in \cite{FLosc} (see the last paragraph of \cite[4.1]{FLosc}), this does not depend on the choice of $e.$ 
\end{df}

\begin{df}\label{DOT}
Let $A$ be a \CA\, with ${\widetilde{QT}}(A)\setminus \{0\}\not=\emptyset.$ 
Let $S\subset {\widetilde{QT}}(A)$
be a compact subset and 
$x\in A\otimes {\cal K}.$ Define
\beq
\|x\|_{_{2, S}}=\sup\{\tau(x^*x)^{1/2}: \tau\in S\}.
\eneq
In this case,  for $a\in (A\otimes {\cal K})_+,$ 
we write $\Omega^T(a)|_S=0,$ if $\|a\|_{_{2, S}}<\infty$ and, 
if there is a sequence $b_n\in {\rm Ped}({\rm Her}(a))_+^{\bf 1}$ such that, with 
\beq
\lim_{n\to\infty}\omega(b_n)|_S=0\andeqn \lim_{n\to\infty}\|a-b_n\|_{_{2, S}}=0
\eneq
(see Proposition 4.8 of \cite{FLosc}).
 
 Now let us assume that $A$ is algebraically simple and $QT(A)\not=\emptyset.$ 
 Let $a\in 
 (A\otimes {\cal K})_+^{\bf 1}.$ 
 In this case we write $\Omega^T(a)=0$ if $\Omega^T(a)|_{_{\Qw}}=0.$

Even if $A$ is not algebraically simple (but $\sigma$-unital and simple), 
we may fix a  nonzero element $e\in {\rm Ped}(A)\cap  A_+$ (so that ${\rm Her}(e)\otimes {\cal K}\cong 
A\otimes {\cal K}$) and choose $S=\overline{QT({\rm Her}(e))}^w.$ 
Then, for any 
$a\in (A\otimes {\cal K})_+,$   we write $\Omega^T(a)=0$ if $\Omega^T(a)|_S=0$ 
(this does not depend on the choice of $e,$ see 5.1 of \cite{FLosc}).

A $\sigma$-unital simple \CA\, $A$ is said to have $T$-tracial approximate oscillation zero, if
$\Omega^T(a)=0$ for 
every  positive element $a\in {\rm Ped}(A\otimes {\cal K})_+.$  
There is also a notion of tracial approximate oscillation zero.
In the case that $A$ has strict comparison, $A$ has tracial approximate oscillation zero 
if and only if 
$A$ has $T$-tracial approximate oscillation zero. Since we consider only 
\CA s with strict comparison in this paper, for the convenience, 
we will only use the term tracial approximate oscillation zero. 

If we view $\|\cdot \|_{_{2, \Qw}}$ as an  $L^2$-norm, 
 then  that $A$ has T-tracial approximate oscillation zero has an analogue  to that ``almost'' continuous functions are 
 dense in the $L^2$-norm.
\end{df}

\begin{df}\label{Hild}
Let $A$ be a \CA.
Denote by $H_A,$ or $l^2(A),$ the standard countably generated Hilbert 
$A$-module
$$
H_A=\{\{a_n\}: a_n\in A, \sum_{n=1}^k a_n^*a_n \,\,\,{\rm converges\,\,\,
in\,\,\, norm\,\, as}\,\, k\to\infty\},
$$
where the inner product is defined by
$
\la \{a_n\}, \{b_n\}\ra=\sum_{n=1}^{\infty} a_n^*b_n.
$
Let $H$ be a countably generated Hilbert $A$-module. Denote by
$B(H)$ the Banach algebra of all bounded $A$-module maps on $H$ and 
denote by $L(H)$ the subalgebra of $B(H)$ whose maps have adjoints. 
For $x, y\in H,$ define $\theta_{x, y}\in L(H)$ by 
$\theta_{x,y}(z)=x\la y,z\ra$ for all $z\in H.$ 
Denote by $F(H)$ the linear span of those module maps with the form
$\theta_{x, y},$ where $x, y\in H.$ Denote by $K(H)$ the
closure of $F(H).$  $K(H)$ is a \CA.   Then $K(l^2(A))\cong A\otimes {\cal K}.$ 
It follows from a result of
Kasparov (\cite{K}) that $L(H)=M(K(H)),$  the multiplier algebra of
$K(H),$ and, by \cite{Lnbd}, $B(H)=LM(K(H)),$ the left multiplier
algebra of $K(H).$

Let $A$ be a $\sigma$-unital \CA\, and $H$ be a countably generated Hilbert $A$-module.
Then $K(H)$ is a $\sigma$-unital hereditary \SCA\,(see \cite[Lemma 2.13]{Lninj}) of $K(l^2(A))\cong A\otimes {\cal K}.$
Let $a\in K(H)$ be a strictly positive element. Then $\overline{a(l^2(A))}=H.$ 
For each $a\in (A\otimes {\cal K})_+=K(l^2(A))_+,$ define 
$H_a=\overline{a(l^2(A))}.$ If $b\in (A\otimes {\cal K})_+$ such 
that $H_a=H_b,$ then $b\in K(H_a)$ and $\overline{bH_a}=H_a.$ 
It follows that $b$ 
is also  a strictly positive element of $K(H_a)$ (see  \cite[Lemma 13]{MP}).
\end{df}

 Let $a, b\in (A\otimes {\cal K})_+.$
 If $A$ has stable rank one, it was shown in \cite{CEI} 
that $H_a$ is unitarily equivalent to $H_b$
(as Hilbert $A$-modules) if and only if $[a]=[b]$ in ${\rm Cu}(A).$ 

\begin{df}\label{Dmodule}
Let $A$ be a $\sigma$-unital simple \CA\, and 
$H_a$ be a countably generated Hilbert $A$-module associated with the positive element
$a\in A\otimes {\cal K}.$ Define 
\beq
d_\tau(H_a)=d_\tau(a)\rforal \tau\in {\wtd{QT}}(A).
\eneq
One checks that $d_\tau(H_a)$ does not depend the choice of  $a$ as a strictly positive element 
in $K(H_a).$ 
Write  $\omega(H_a)=0$ if $\omega(a)=0.$ 
\end{df}

\section{Strict comparison}
Let $A$ be a $\sigma$-unital simple \CA\, with strict comparison.
We consider the following question:

{\bf Q}: 
Suppose that $a, b\in (A\otimes {\cal K})_+$ such that
\beq
d_\tau(a)<d_\tau(b)\rforal \tau\in {\wtd{QT}}(A)\setminus \{0\}.
\eneq
Does it imply that
 $a\simle b$  (see Definition \ref{Dcuntz})?

Suppose that 
$H_a$ and $H_b$ are countably generated Hilbert $A$-modules
(where $a, b\in (A\otimes {\cal K})_+$). 
 The question above is the same as the following:

Suppose that 
$d_\tau(H_a)<d_\tau(H_b)$ for all $\tau\in \wtd{QT}(A)\setminus \{0\}.$ 
When is  $H_a$ unitarily equivalent to a Hilbert $A$-submodule of $H_b?$ 

If $A$ is ${\cal Z}$-stable, then the answer to {\bf Q}
is affirmative. In fact the answer  to {\bf Q} is affirmative if $A$ has stable rank one. 

 We will partially answer the question in this section.

We will repeatedly use the following fact.

\begin{lem}\label{Lcomapctcontain}
Let $\Delta$ be a compact space and $g, f_n\in C(\Delta)$ be continuous functions 
on $\Delta$
such that $g(t)>0,\, f_n(t)>0$ for all $t\in \Delta,$ $n\in \N.$
Suppose that $f_n\le f_{n+1}$ for all $n\in \N$ and 
$g(t)<\lim_{n\to\infty}f_n(t)$ for all $t\in \Delta.$
Then there exists $n_0\in \N$ such that, for all $n\ge n_0,$
\beq
g(t)<f_n(t)\tforal t\in \Delta.
\eneq
\end{lem}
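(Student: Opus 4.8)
The statement is a compactness argument (a "Dini-type" lemma), so the plan is to reduce it to a standard finite-subcover argument. First I would set, for each $n\in\N$, the open set
\[
U_n=\{t\in\Delta: g(t)<f_n(t)\}=\{t\in\Delta: (f_n-g)(t)>0\},
\]
which is open since $f_n-g$ is continuous on $\Delta$. The hypothesis that $g(t)<\lim_{m\to\infty}f_m(t)$ for every $t$ says precisely that every $t\in\Delta$ lies in some $U_m$, so $\{U_n\}_{n\in\N}$ is an open cover of the compact space $\Delta$. Extract a finite subcover $U_{n_1},\dots,U_{n_k}$.

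Next I would use the monotonicity $f_n\le f_{n+1}$ to see that the sets $U_n$ are nested increasing: if $g(t)<f_n(t)$ and $m\ge n$ then $g(t)<f_n(t)\le f_m(t)$, so $U_n\subseteq U_m$. Hence, setting $n_0=\max\{n_1,\dots,n_k\}$, the finite subcover already gives $\Delta=U_{n_1}\cup\cdots\cup U_{n_k}=U_{n_0}$, i.e. $g(t)<f_{n_0}(t)$ for all $t\in\Delta$. Applying the nesting once more, for every $n\ge n_0$ we get $\Delta=U_{n_0}\subseteq U_n$, which is exactly the desired conclusion $g(t)<f_n(t)$ for all $t\in\Delta$.

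There is no real obstacle here; the only point to be slightly careful about is that the hypotheses include $g>0$ and $f_n>0$ everywhere, but these are not actually needed for the argument above — the only facts used are continuity of $g$ and the $f_n$, monotonicity of $\{f_n\}$, and the strict pointwise inequality $g(t)<\lim_n f_n(t)$. (The positivity hypotheses are presumably stated because that is the form in which the lemma will be applied, with $g=\Gamma([a])$ and $f_n$ a sequence from $\Aff_+$ increasing to $\Gamma([b])$.) So the proof is just: form the open cover $\{U_n\}$, invoke compactness for a finite subcover, and use monotonicity to collapse it to a single $U_{n_0}$ and then to all larger indices.
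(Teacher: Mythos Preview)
Your proof is correct and is essentially the same compactness argument as the paper's: the paper picks, for each $t_0\in\Delta$, an index $n(t_0)$ with $g(t_0)<f_{n(t_0)}(t_0)$, uses continuity to get an open neighborhood $O(t_0)$ on which $f_{n(t_0)}>g$, extracts a finite subcover $O(t_1),\dots,O(t_m)$, and sets $n_0=\max_i n(t_i)$. Your version packages this slightly more cleanly by working directly with the nested open sets $U_n=\{f_n>g\}$, but the idea is the same.
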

%
%

\begin{lem}{\rm{[cf.  Lemma 2.3 of \cite{Lin91cs}]}}\label{Lcontsc}
Let $A$ be a non-elementary simple \CA\, and $a\in A_+\setminus \{0\}.$
Then there exists a sequence of mutually orthogonal  elements $\{b_n\}$ 
in $\Her(f_\dt(a))_+^{\bf 1}$ for some $0<\dt<1/2$ such that $\|b_n\|=1$ and 
\beq
2^n [b_{n+1}]\le  [f_\dt(b_n)]\,\,\, { in}\,\, {\rm Cu}(A),\,\,\, n\in \N.
\eneq
\end{lem}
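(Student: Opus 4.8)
The plan is to build the sequence $\{b_n\}$ recursively, using the non-elementariness of $A$ at each step to split off a small orthogonal piece with a prescribed comparison gap. First, since $a\neq 0$, I fix $0<\delta<1/2$ with $f_\delta(a)\neq 0$ and work inside $B_0:=\Her(f_\delta(a))$, which is a non-elementary simple $C^*$-algebra. The key local step is: given a nonzero positive contraction $c$ in a non-elementary simple $C^*$-algebra, and given $N\in\N$, produce nonzero orthogonal positive contractions $c'$ and $d$ inside $\Her(f_{\delta_0}(c))$ (for some small $\delta_0$) with $\|c'\|=\|d\|=1$, such that $N[c']\le [f_{\delta_0}(d)]$ in $\Cu$. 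This is exactly the content of the cited Lemma 2.3 of \cite{Lin91cs}-type argument: because $A$ is simple and non-elementary, $\Her(f_{\delta_0}(c))$ contains $N+1$ mutually orthogonal nonzero positive elements that are pairwise Cuntz-subequivalent (e.g.\ via halving supports repeatedly, or via the standard fact that a simple non-elementary $C^*$-algebra has no minimal nonzero hereditary subalgebra and one can find orthogonal Cuntz-equivalent positive elements of arbitrary finite multiplicity); grouping $N$ of them as $c'$ (taking a representative of norm one) and keeping one as $d$ gives $N[c']\le [d]$, and then replacing $d$ by $f_{\delta_0}(d)$ after a further small cut — using $[d]=\sup_\eta[f_\eta(d)]$ and $\ll$-compactness of $[c']$ relative to $[d]$ — yields $N[c']\le[f_{\delta_0}(d)]$ while keeping $c',d$ orthogonal positive contractions of norm one inside $\Her(f_{\delta_0}(c))$.

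Now I iterate. Set $c_1:=f_\delta(a)$ (rescaled to norm one if necessary; its hereditary subalgebra is unchanged). Apply the local step with $N=2^1=2$ to get orthogonal norm-one positive contractions $b_1$ and $c_2$ inside $\Her(f_{\delta}(a))$ with $2[b_1]\le [f_{\delta}(c_2)]$ — here I just need $b_1$, and I carry $c_2$ forward as the ``remaining room''. Inductively, having produced $c_n$ a norm-one positive contraction in $\Her(f_\delta(a))$, orthogonal to $b_1,\dots,b_{n-1}$, apply the local step inside $\Her(c_n)$ with $N=2^n$ to obtain orthogonal norm-one positive contractions $b_n$ and $c_{n+1}$ in $\Her(f_{\delta'}(c_n))\subseteq \Her(c_n)$ with
\[
2^n[b_n]\le [f_{\delta'}(c_{n+1})].
\]
Since $\Her(c_n)$ is orthogonal to $b_1,\dots,b_{n-1}$, all the $b_k$ are mutually orthogonal and lie in $\Her(f_\delta(a))_+^{\mathbf 1}$. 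The only remaining point is to get the comparison in the exact form stated, namely $2^n[b_{n+1}]\le [f_\delta(b_n)]$ with the \emph{same} cut-down $f_\delta$ applied to $b_n$. Since $\|b_n\|=1$ we have $f_\delta(b_n)\neq 0$; and $b_{n+1}$ (being built inside $\Her(f_{\delta'}(c_{n+1}))\subseteq\Her(c_{n+1})$, where $c_{n+1}$ was extracted from $\Her(f_{\delta''}(b_n))$ at the previous stage) can from the start be arranged to sit inside $\Her(f_\delta(b_n))$ — I simply perform, at stage $n$, the splitting of $b_n$ versus $c_{n+1}$ so that $c_{n+1}\in\Her(f_\delta(b_n))$, which is possible because $\Her(f_\delta(b_n))$ is again non-elementary and simple, hence has plenty of room. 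Re-indexing, this gives $2^n[b_{n+1}]\le[f_\delta(b_n)]$ for all $n$.

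The main obstacle is the bookkeeping: one must keep track of three nested cut-downs simultaneously — the ambient $f_\delta(a)$, the successive small cuts $f_{\delta'}(c_n)$ needed to pass from a Cuntz inequality $N[b_n]\le[c_{n+1}]$ to one with a cut on the right-hand side, and the final cut $f_\delta(b_n)$ appearing in the statement — and verify they are mutually consistent with orthogonality preserved at every step. The underlying comparison facts (that in a simple non-elementary $C^*$-algebra one can find, inside any nonzero hereditary subalgebra, $N+1$ orthogonal nonzero positive elements with one Cuntz-dominating the sum of the other $N$, together with the compact-containment trick $[b]\ll[b]$ after a cut) are standard and can be quoted from \cite{Lin91cs} and from the general theory of the Cuntz semigroup; no strict comparison or quasitrace input is needed here, only simplicity and non-elementariness.
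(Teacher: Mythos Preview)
Your iteration has the roles of $b_n$ and $c_{n+1}$ backwards, and the attempted repair destroys orthogonality. In your local step the small piece is $c'$ and the large piece is $d$, and you assign $b_n\leftrightarrow c'$, $c_{n+1}\leftrightarrow d$, obtaining $2^n[b_n]\le[f_{\delta'}(c_{n+1})]$ with $b_n\perp c_{n+1}$. Carving $b_{n+1}$ from $\Her(c_{n+1})$ does make the $b_k$ mutually orthogonal, but the inequality you have points the wrong way: it says nothing about $[b_{n+1}]$ versus $[f_\delta(b_n)]$. Your fix---``perform the splitting so that $c_{n+1}\in\Her(f_\delta(b_n))$''---forces $b_{n+1}\in\Her(c_{n+1})\subset\Her(b_n)$, so $b_{n+1}$ is no longer orthogonal to $b_n$; you cannot have the nested comparison and the orthogonality simultaneously with this labelling. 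The correction is to swap roles: let $b_n$ be the \emph{large} piece and $c_{n+1}$ the \emph{small} one inside $\Her(c_n)$, so that $2^n[c_{n+1}]\le[f_\delta(b_n)]$ with $b_n\perp c_{n+1}$; then $b_{n+1}\in\Her(c_{n+1})$ gives both $b_{n+1}\perp b_n$ and $2^n[b_{n+1}]\le 2^n[c_{n+1}]\le[f_\delta(b_n)]$.

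There is a secondary gap in the local step itself: from $N+1$ orthogonal Cuntz-equivalent pieces you obtain $N[c']=[d]$, but the passage to $N[c']\le[f_{\delta_0}(d)]$ does not follow from ``$\ll$-compactness of $[c']$ relative to $[d]$'', since what you would need is $N[c']\ll[d]$, which fails when $N[c']=[d]$ and $[d]$ is not compact. The paper sidesteps both problems by a direct algebraic construction: using simplicity to write $f_\delta(a_{0,1})=\sum_i x_i^*a_{0,2}x_i$ and picking one nonzero term $c_1'=x_1^*a_{0,2}x_1$, one has $c_1'\in\Her(f_\delta(a_{0,1}))$ and $c_1'\lesssim a_{0,2}$ at once, so $2[c_1']\le[f_\delta(a)]$ with the cut already in place. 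The paper only secures a factor $2$ at each stage, producing mutually orthogonal $\{c_k\}$ with $2[c_{k+1}]\le[f_\delta(c_k)]$, and then sets $b_n=c_{2^n}$; the required $2^n[b_{n+1}]\le[f_\delta(b_n)]$ follows by telescoping the $2^n$ intermediate inequalities.
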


\begin{proof}
\Wlog, we may assume that $\|a\|=1.$
Choose $0<\dt<1/2$ such that $f_\dt(a)\not=0.$
Since $\Her(f_\dt(a))$ is a simple and non-elementary, by p.67 of \cite{AS},
there exists $c_0\in \Her(f_\dt(a))_+^{\bf 1}$ such that ${\rm sp}(c_0)=[0,1].$
Hence $\Her(f_\dt(a))_+^{\bf 1}$ contains $2$ mutually orthogonal elements 
$a_{0,1}, a_{0,2}$ with $\|a_{0,i}\|=1,$ $i=1,2.$  Note that $a_{0,i}\in {\rm Ped}(A)_+$
and  that $f_\dt(a_{0,i})\not=0,$ $i=1,2.$ 
By Lemma 3.4 of \cite{eglnp},  there are $x_{1,1}, x_{1,2},..,x_{1,m_1}\in A$ 
such that
\beq
f_\dt(a_{0,1})=\sum_{i=1}^{m_1} x_{1,i}^* a_{0,2}x_{1,i}. 
\eneq
We may assume that $c_1'=x_{1,1}^*a_{0,2}x_{1,1}\not=0.$
Then 
\beq
c_1'\lesssim a_{0,2}\andeqn c_1'\le f_\dt(a_{0,1}).
\eneq
Put $c_{1,1}=c_1'/\|c_1\|.$  Then $c_{1,1}\lesssim a_{0,1}, a_{0,2}.$
It follows that
\beq
2[c_{1,1}]\le [f_\dt(a)]\andeqn c_{1,1}\in \Her(f_\dt(a_{0,1}))_+.
\eneq
Repeating the same argument, we obtain  $y_{1,1},y_{1,2},...,y_{1,m_1'}\in A$ such that
\beq
f_\dt(a_{0,2})=\sum_{i=1}^{m_1'} y_{1,i}^* c_{1,1}y_{1,i}.
\eneq
We may assume  that $c_2'=y_{1,1}^*c_{1,1}y_{1,1}\not=0.$
Then 
\beq
c_2'\lesssim c_{1,1}\andeqn c_2'\in \Her(f_\dt(a_{0,2})).
\eneq
Put $c_{1,2}=c_2'/\|c_2'\|.$ 
Then 
\beq
[c_{1,2}]\le [c_{1,1}],\,\, 2[c_{1,2}]\le [f_\dt(a)]\andeqn c_{1,2}\perp c_{1,1}.
\eneq

Next we note that $f_\dt(c_{1,2})\not=0.$
By repeating  the same argument above, we obtain  two mutually orthogonal non-zero elements
$c_{2,1}, c_{2,2}\in \Her(f_\dt(c_{1,2}))_+$ with $\|c_{2,i}\|=1,$ $i=1,2,$ and 
\beq
c_{2,2}\lesssim c_{2,1}, \,\, 2[c_{2,1}]\le [c_{1,2}]\,\,\, {\rm in}\,\, {\rm Cu}(A).
\eneq 
Put $c_1=c_{1,1},$ $c_2=c_{2,1}.$ Note that 
\beq
c_1\perp c_2,\,\, 2[c_1]\le [f_\dt(a)]\andeqn 2[c_2]\le [f_\dt(c_1)].
\eneq
By repeated  applying the argument above, we obtain a 
sequence of mutually orthogonal elements $\{c_n\}$ 
with $\|c_n\|=1$ 
such that 
$$
2[c_{n+1}]\le [f_{\dt}(c_n)].
$$
The lemma then follows by choosing 
$b_n=c_{2^n},$ $n\in \N.$
\end{proof}

\begin{lem}\label{Ltwo}
Let $A$ be a $\sigma$-unital non-elementary  algebraically simple \CA\,
such that $QT(A)\not=\emptyset.$  Let $a,\, b\in (A\otimes {\cal K})_+.$
If one of the following holds:

(1) $d_\tau(a)+\omega(a)<d_\tau(b)$ for all $\tau\in \Qw,$  or

(2) there exists $r>0$ such that $d_\tau(a)+r<d_\tau(b)$ for all $\tau\in \Qw,$

\noindent
then there are two mutually orthogonal nonzero elements $b_0, b_1\in \Her(b)_+^{\bf 1}$
such that
\beq
d_\tau(a)<d_\tau(b_1)\tforal \tau\in \Qw.
\eneq
\end{lem}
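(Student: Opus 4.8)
The plan is to reduce both cases to a single mechanism: split off a small ``buffer'' summand from $b$, of dimension function dominated by what is left, so that the remainder still dominates $d_\tau(a)$ strictly (and uniformly, by compactness of $\Qw$). Concretely, using Lemma \ref{Lcontsc} applied to $b$, one produces a decreasing-in-Cuntz-size sequence $\{b_n\}$ of mutually orthogonal nonzero positive contractions sitting inside $\Her(f_\dt(b))$ with $2^n[b_{n+1}]\le [f_\dt(b_n)]$. Since these live in orthogonal corners of $\Her(b)$, for any fixed $N$ we get a decomposition $b\sim b' \oplus (\text{an orthogonal copy of } b_N)$ with $b'\in\Her(b)_+^{\bf 1}$ and $b'$ still ``almost all of $b$''; the point is that $d_\tau(b_N)$ can be made as small as we like relative to $d_\tau(b)$, uniformly in $\tau$, because $\sum_n d_\tau(b_n)$ behaves like a convergent geometric sum bounded by $d_\tau(b)$. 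Here I would want to be a little careful: $d_\tau(b_N)(\tau)$ need not be uniformly small pointwise unless we know $d_\tau(b)$ is bounded on $\Qw$, which is exactly why algebraic simplicity ($b$ may be taken in $\mathrm{Ped}$, so $\hat b$ and hence a dominating continuous function is finite on the compact set $\Qw$) is in the hypotheses.

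The first step is therefore: in case (2), where we have the uniform gap $d_\tau(a)+r<d_\tau(b)$, I would choose $N$ so large that $d_\tau(b_N)<r/2$ for all $\tau\in\Qw$ (possible once $d_\tau(b_1)$ is bounded above by a constant on $\Qw$, since $2^{N-1}[b_N]\le[f_\dt(b_{N-1})]\le[b_1]$ forces $d_\tau(b_N)\le 2^{-(N-1)}\|\widehat{b_1}\|_\infty$). Then set $b_0$ to be a nonzero positive contraction Cuntz-below $b_N$ (so $d_\tau(b_0)\le d_\tau(b_N)<r/2$) living in the corner orthogonal to the rest, and let $b_1$ be a strictly positive contraction of the hereditary subalgebra of $b$ orthogonal to that corner, so $b_0\perp b_1$, both nonzero, both in $\Her(b)_+^{\bf 1}$. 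Since $d_\tau(b_1)=d_\tau(b)-d_\tau(b_0)>d_\tau(b)-r/2>d_\tau(a)+r/2>d_\tau(a)$ for all $\tau\in\Qw$, we are done. (The additivity $d_\tau(b)=d_\tau(b_0)+d_\tau(b_1)$ uses orthogonality of $b_0,b_1$ and that together they generate $\Her(b)$ up to Cuntz equivalence — this is where one invokes strict comparison, or just the standard fact that $d_\tau$ is additive on orthogonal sums and that $[b_0]+[b_1]=[b]$.)

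The second step handles case (1): $d_\tau(a)+\omega(a)<d_\tau(b)$ on $\Qw$. The function $d_\tau(a)$ need not be continuous, but $\tau\mapsto\tau(f_{1/n}(a))$ is continuous and increases to $d_\tau(a)$, and by definition of $\omega(a)$ we have $d_\tau(a)-\tau(f_{1/n}(a))\le\omega(a)+\epsilon_n$ uniformly on $\Qw$ with $\epsilon_n\to0$. So $\tau(f_{1/n}(a))\ge d_\tau(a)-\omega(a)-\epsilon_n$, and combining with the hypothesis, $\tau(f_{1/n}(a))+\epsilon_n < d_\tau(b) - (d_\tau(a)-\tau(f_{1/n}(a))-\omega(a))$ — more to the point, I would instead argue directly: the hypothesis gives a continuous function $g:=\widehat{f_{1/n}(a)}+\omega(a)$-type bound with $g(\tau)<d_\tau(b)$ once $n$ is large, because $d_\tau(a)\le\tau(f_{1/n}(a))+\omega(a)+\epsilon_n$, hence $d_\tau(a)+\omega(a)<d_\tau(b)$ yields $\tau(f_{1/n}(a))+2\omega(a)+\epsilon_n$ ... cleanest: fix a continuous $c(\tau):=\tau(f_{1/n}(a))$ with $n$ large enough that $c(\tau)+ (\omega(a)+\epsilon_n) \ge d_\tau(a)$ and then note the hypothesis plus Lemma \ref{Lcomapctcontain} (applied to the continuous functions $\widehat{f_{1/n}(b)}\nearrow d_\tau(b)$ against the continuous function $c+r'$ for a small constant $r'>0$ chosen below $\min_\tau(d_\tau(b)-d_\tau(a)-\omega(a))>0$) gives a uniform gap of constant size between $d_\tau(b)$ and $c(\tau)$. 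That reduces case (1) to the already-treated uniform-gap situation applied with $f_{1/n}(a)$ in place of $a$: we obtain $b_0\perp b_1$ in $\Her(b)_+^{\bf 1}$ nonzero with $d_\tau(b_1)>c(\tau)=\tau(f_{1/n}(a))\ge d_\tau(a)-\omega(a)-\epsilon_n$; enlarging the gap from the start (shrink $r'$, and actually compare against $d_\tau(a)$ directly since $d_\tau(a)\le c(\tau)+\omega(a)+\epsilon_n$ and we have room $\omega(a)$ to spare) yields $d_\tau(b_1)>d_\tau(a)$ on $\Qw$.

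The main obstacle I anticipate is the bookkeeping in case (1): reconciling the lower-semicontinuous, possibly-discontinuous $d_\tau(a)$ with the continuous approximants $\tau(f_{1/n}(a))$, and making sure the slack $\omega(a)$ in the hypothesis is exactly enough to absorb both the oscillation term $d_\tau(a)-\tau(f_{1/n}(a))$ and the $\epsilon_n$ error, \emph{uniformly} on $\Qw$ — this is precisely the content of Lemma \ref{Lcomapctcontain} together with the definition of $\omega(a)$, so I expect it to go through but it requires care in the order of choices ($r'$ before $n$, then $N$). The orthogonal-splitting step itself is routine once Lemma \ref{Lcontsc} is in hand: it is the standard device of peeling a Cuntz-small orthogonal summand off a positive element inside its own hereditary subalgebra, and additivity of $d_\tau$ on orthogonal sums is elementary.
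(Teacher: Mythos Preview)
Your treatment of case~(2) is essentially the paper's argument: use Lemma~\ref{Lcontsc} to locate a small element $c$ in $\Her(b)$, set $b_0=f_{1/4}(c)$, and take $b_1$ to be the ``complementary'' piece. Two minor corrections. First, algebraic simplicity of $A$ does \emph{not} force $b\in\mathrm{Ped}(A\otimes\mathcal{K})$, since $b$ lives in $A\otimes\mathcal{K}$, which is never algebraically simple; so your justification for $\|\widehat{[b_1]}\|_\infty<\infty$ fails as written (the paper simply writes $\sup\{d_\tau(b):\tau\in\Qw\}/n<r$; in any case the fix is easy, e.g.\ work inside $\Her(f_\dt(b))$ and bound by a multiple of the continuous function $\widehat{f_{\dt/2}(b)}$). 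Second, you do not get the equality $d_\tau(b)=d_\tau(b_0)+d_\tau(b_1)$, and strict comparison is neither assumed nor needed here; the paper obtains the inequality $d_\tau(b)\le d_\tau(b_1)+d_\tau(c)$ from the Cuntz subequivalence $b\lesssim b^{1/2}(1-f_{1/8}(c))b^{1/2}\oplus f_{1/8}(c)$, which is all that is required.

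The real gap is your reduction of case~(1) to case~(2). Your argument at one point simply asserts $\min_\tau\bigl(d_\tau(b)-d_\tau(a)-\omega(a)\bigr)>0$; but $d_\tau(b)-d_\tau(a)$ is a difference of two lower semicontinuous functions and has no reason to attain a positive infimum on $\Qw$, so this is exactly the point that needs proof. Your subsequent attempt to run case~(2) with $f_{1/n}(a)$ in place of $a$ only yields $d_\tau(b_1)>d_\tau(b)-r$, and to turn this into $d_\tau(b_1)>d_\tau(a)$ you again need $d_\tau(a)+r<d_\tau(b)$, which is precisely case~(2) for $a$ itself---so the detour through $f_{1/n}(a)$ is circular. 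The paper's reduction is a one-line dichotomy you are missing: if $\omega(a)>0$, then the hypothesis $d_\tau(a)+\omega(a)<d_\tau(b)$ \emph{is} case~(2) with $r=\omega(a)$; if $\omega(a)=0$, then $\widehat{[a]}$ is continuous on $\Qw$, so $\tau\mapsto d_\tau(b)-d_\tau(a)$ is lower semicontinuous and strictly positive on the compact set $\Qw$, hence bounded below by some $2r>0$. Either way one lands directly in case~(2), with no approximants and no circularity.
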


\begin{proof}
We first note that the case (1) may be reduced to case (2).
To see this we may assume that 
 $\omega(a)=0.$ In other words, $\wh{[a]}\in \Aff_+(\Qw).$ 
Then  $h(\tau)=d_\tau(b)-d_\tau(a)\in \LAff_+(\Qw).$
Put 
\beq 
r_0=(1/2) \inf\{h(\tau): \tau\in \Qw\}>0.
\eneq
If $r_0<\infty,$ choose $r=r_0$ and, if $r_0=\infty,$ choose  any  $0<r<\infty.$ 
Then 
\beq
d_\tau(a)+r<d_\tau(b)\rforal \tau\in \Qw.
\eneq

Next we show the  conclusion holds for case (2). 
Choose $\eta>0$ such that $f_\eta(b)\not=0.$ 
Then $f_\eta(b)\lesssim b$   and $f_{\eta}(b)\in {\rm Ped}(A\otimes {\cal K})_+.$ 
Hence $\sup\{d_\tau(f_\eta(b)):\tau\in \Qw\}<\infty$ (see, for example,  (2) of Proposition 2.10 of \cite{FLosc}).
Choose $n\in \N$  with $n\ge 2$ such that 
\beq
\sup\{d_\tau(f_\eta(b)): \tau\in \Qw\}/n<r/2.
\eneq
By Lemma \ref{Lcontsc}, one obtains an element $c\in \Her(f_\eta(b))_+\subset \Her(b)_+$ with $\|c\|=1$
such that
\beq
n[c]\le [f_\eta(b)].
\eneq
Hence, for all $\tau\in \Qw,$
\beq
d_\tau(c)<r/2.
\eneq
Choose $b_0=f_{1/4}(c).$ Then $\|b_0\|=1$ and $d_\tau(b_0)<r/2.$ 
Define $b_1=(1-f_{1/8}(c))^{1/2} b(1-f_{1/8}(c))^{1/2}.$
Then 
\beq
b_1b_0=(1-f_{1/8}(c))^{1/2} b(1-f_{1/8}(c))^{1/2}b_0=0=b_0b_1.
\eneq
We compute that
\beq
b\lesssim b^{1/2}(1-f_{1/8}(c))b^{1/2}\oplus 
f_{1/8}(c).
\eneq
Hence 
\beq
d_\tau(b)\le d_\tau(b_1)+d_\tau(c)\le d_\tau(b_1)+r/2\rforal \tau\in \Qw.
\eneq
It follows that
\beq
d_\tau(b_1)\ge d_\tau(b)-r>d_\tau(a)\rforal \tau\in \Qw.
\eneq
\end{proof}

\begin{cor}\label{Ctwo}
Let $A$ be a $\sigma$-unital non-elementary  algebraically simple \CA, $a,\, b\in (A\otimes {\cal K})_+$
such that $QT(A)\not=\emptyset.$ 
If 
$$
d_\tau(a)<d_\tau(b)\tforal \tau\in \Qw\andeqn \omega(a)=0,
$$
then there are mutually orthogonal non-zero elements $b_1, b_0\in \Her(b)_+^{\bf 1}$ 
and $0<\dt<1/4,$ 
such that
\beq
d_\tau(a)<d_\tau(f_\dt(b_1))\tforal \tau\in \Qw.
\eneq
\end{cor}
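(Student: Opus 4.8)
The plan is to obtain Corollary~\ref{Ctwo} from Lemma~\ref{Ltwo} together with the compactness input of Lemma~\ref{Lcomapctcontain}. Since $\omega(a)=0$ by hypothesis, we have $d_\tau(a)+\omega(a)=d_\tau(a)<d_\tau(b)$ for all $\tau\in\Qw$, so condition~(1) of Lemma~\ref{Ltwo} is met. Applying that lemma yields mutually orthogonal nonzero elements $b_0,b_1\in\Her(b)_+^{\bf 1}$ with $d_\tau(a)<d_\tau(b_1)$ for all $\tau\in\Qw$; these are the elements asked for, and it only remains to lower $d_\tau(b_1)$ to $d_\tau(f_\delta(b_1))$ for a suitable small $\delta$. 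Recall here that $\Qw$ is compact and $0\notin\Qw$ (the remark in the excerpt for algebraically simple $A$ with $QT(A)\ne\emptyset$), so every $\tau\in\Qw$ is a nonzero, hence faithful, $2$-quasitrace on the simple algebra $A$.

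If $a=0$ the statement is immediate: since $b_1\ne 0$ one may choose $0<\delta<1/4$ with $f_\delta(b_1)\ne 0$, and then $d_\tau(f_\delta(b_1))>0=d_\tau(a)$ on $\Qw$ by faithfulness. So I would assume $a\ne 0$, in which case $d_\tau(a)>0$ on $\Qw$, and $\tau\mapsto d_\tau(a)$ is moreover continuous on $\Qw$ because $\omega(a)=0$ (Definition~\ref{Domega}). Next I would pick $N$ with $2^{-N}<\min\{\|b_1\|,\,1/8\}$, so that $f_{1/2^n}(b_1)\ne 0$ for $n\ge N$, and set $g_n(\tau)=\tau(f_{1/2^n}(b_1))$ for $n\ge N$. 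Each $g_n$ is continuous on $\Qw$ (as $f_{1/2^n}(b_1)\in{\rm Ped}(A\otimes{\cal K})_+$) and strictly positive (faithfulness), the sequence $(g_n)$ is increasing because $\delta\mapsto f_\delta(t)$ is non-increasing (so $f_{1/2^{n+1}}(b_1)\ge f_{1/2^n}(b_1)$), and $g_n(\tau)\nearrow d_\tau(b_1)$ for each $\tau$ by the definition of $d_\tau$.

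Now $d_\tau(a)<d_\tau(b_1)=\lim_n g_n(\tau)$ on the compact set $\Qw$, so Lemma~\ref{Lcomapctcontain} furnishes $n_0\ge N$ with $d_\tau(a)<g_{n_0}(\tau)=\tau(f_\delta(b_1))$ for all $\tau\in\Qw$, where $\delta:=1/2^{n_0}$ (note $0<\delta\le 2^{-N}<1/8<1/4$). Finally, for any $c\in(A\otimes{\cal K})_+^{\bf 1}$ one has $\tau(c)\le d_\tau(c)$ (since $\tau(c)\le\tau(\operatorname{supp}(c))=d_\tau(c)$, using $\operatorname{supp}(c)\ge c$ and monotonicity of $\tau$); applying this to $c=f_\delta(b_1)$ gives $d_\tau(a)<\tau(f_\delta(b_1))\le d_\tau(f_\delta(b_1))$ for all $\tau\in\Qw$, which is exactly the assertion. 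The whole argument is routine once Lemma~\ref{Ltwo} is available; the only delicate point is passing from the pointwise comparison $d_\tau(a)<d_\tau(b_1)$ to a single $\delta$ that works simultaneously over all of $\Qw$, and this is precisely what the compactness lemma supplies, so I do not anticipate a genuine obstacle.
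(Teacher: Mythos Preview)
Your proof is correct and follows essentially the same approach as the paper: apply Lemma~\ref{Ltwo} to obtain $b_0,b_1$, then use continuity of $\tau\mapsto d_\tau(a)$ (from $\omega(a)=0$) together with Lemma~\ref{Lcomapctcontain} applied to the increasing sequence $\tau\mapsto\tau(f_{1/n}(b_1))$ to find a single $\delta$ that works uniformly on $\Qw$. You are a bit more careful than the paper in verifying the positivity hypotheses of Lemma~\ref{Lcomapctcontain} and in ensuring $\delta<1/4$, but the argument is the same.
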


\begin{proof}
By Lemma \ref{Ltwo}, there are mutually orthogonal non-zero elements $b_1, b_0\in \Her(b)_+^{\bf 1}$
such that 
\beq
d_\tau(a)<d_\tau(b_1)\rforal \tau\in \Qw.
\eneq
Since $d_\tau(a)$ is continuous on $\Qw,$ by Lemma \ref{Lcomapctcontain},
there is $n\in \N$ such that
\beq
d_\tau(a)<\tau(f_{1/n}(b_1))<d_\tau(f_{1/n}(b_1))\rforal \tau\in \Qw.
\eneq
Choose $\dt=1/n.$
\end{proof}

\begin{thm}\label{Tcomplessapprox}
Let $A$ be a $\sigma$-unital simple \CA\, with strict comparison which is not purely infinite.
Let $a, b\in (A\otimes {\cal K})_+$ be such that 
\beq\label{lessapprox-1}
d_\tau(a)<d_\tau(b)<\infty \rforal \tau\in {\widetilde{QT}}(A)\setminus \{0\}.
\eneq
Suppose that $\omega(a)=0.$ Then,  $a \simle b.$
\end{thm}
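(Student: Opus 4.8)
Here is the approach I would take.

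The plan is to write $a$ as a norm-convergent sum of pieces each of which can be moved into $\Her(b)$, arranged so that the pieces land in mutually orthogonal corners; reassembling the corresponding partial isometries produces an $x$ with $x^*x=a$ and $xx^*\in\Her(b)$, which is what $a\simle b$ means. First I make the usual reduction to the algebraically simple case: replacing $A$ by $\Her(e)$ for a full $e\in\Ped(A)_+\setminus\{0\}$ (permissible since $\Her(e)\otimes\mathcal K\cong A\otimes\mathcal K$ and $\omega$, $\simle$ and strict comparison all transfer, cf.\ Definition \ref{Domega}), I may assume $A$ is algebraically simple with $QT(A)\neq\emptyset$, so that $\widetilde{QT}(A)\setminus\{0\}=\R_+\cdot\Qw$ with $\Qw$ a compact convex set not containing $0$; and I normalise $\|a\|,\|b\|\le 1$. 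Then \eqref{lessapprox-1} reads $d_\tau(a)<d_\tau(b)$ on $\Qw$, while $\omega(a)=0$ says exactly that $\tau\mapsto d_\tau(a)$ is continuous on $\Qw$. Two consequences are used throughout: since $d_\tau(b)-d_\tau(a)$ is lower semicontinuous and strictly positive on the compact $\Qw$, there is $r>0$ with $d_\tau(a)+2r<d_\tau(b)$ for all $\tau$; and, from $\tau(f_{2\epsilon}(a))\le d_\tau((a-\epsilon)_+)\le d_\tau(a)$ together with $\sup_{\tau}\bigl(d_\tau(a)-\tau(f_{2\epsilon}(a))\bigr)\to 0$, the convergence $d_\tau((a-\epsilon)_+)\nearrow d_\tau(a)$ is uniform on $\Qw$. (By strict comparison, $a\lesssim b$.)

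Pick $\epsilon_n\downarrow 0$ so fast that $\mu_n:=\sup_{\tau\in\Qw}\bigl(d_\tau(a)-d_\tau((a-\epsilon_n)_+)\bigr)$ satisfies $\sum_n\mu_n<r$ and $\sum_n\epsilon_n^{1/2}<\infty$, and take a continuous partition of unity $\{h_n\}_{n\ge0}$ of $(0,1]$ with $h_n$ supported near the band $(\epsilon_{n+1},\epsilon_n]$, the overlaps of consecutive $h_n$'s lying inside these (thin) bands. Put $v_n:=a\,h_n(a)\in A\otimes\mathcal K$. Then $\sum_{n\ge0}v_n=a$, the series converging in norm since $\|v_n\|\le\epsilon_{n-1}$; and, because each overlap sits inside a thin band and $\omega(a)=0$ makes these bands have uniformly small rank, one gets $d_\tau(v_n)\le\mu_n\to0$ uniformly (for $n\ge1$) and, crucially, $\sum_{k=0}^n d_\tau(v_k)\le d_\tau(a)+\sum_{k\ge1}\mu_k<d_\tau(b)$ for every $n$ and every $\tau\in\Qw$. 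I then construct inductively partial isometries $y_n\in A\otimes\mathcal K$ with $y_n^*y_n=v_n$, $y_ny_n^*\in\Her(b)$, and $\{y_ny_n^*\}_n$ mutually orthogonal: given $y_0,\dots,y_n$, the corner $z:=\sum_{k=0}^n y_ky_k^*\in\Her(b)$ has $d_\tau(z)=\sum_{k=0}^n d_\tau(v_k)<d_\tau(b)$, hence $d_\tau(z)+d_\tau(v_{n+1})<d_\tau(b)$ after passing to a suitable cutoff; using Lemma \ref{Lcontsc} and Corollary \ref{Ctwo} to split off from $\Her(b)$ a nonzero hereditary subalgebra $B_n$ genuinely orthogonal to $z$ whose dimension function still exceeds that of $v_{n+1}$, and strict comparison together with the standard device turning $\lesssim$ into $\simle$ after a cutoff, I get $y_{n+1}$ with $y_{n+1}^*y_{n+1}=v_{n+1}$, $y_{n+1}y_{n+1}^*\in B_n\subseteq\Her(b)$ orthogonal to $z$, and $\|y_{n+1}\|=\|v_{n+1}\|^{1/2}\le\epsilon_n^{1/2}$. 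Finally set $x:=\sum_{n\ge0}y_n$: mutual orthogonality of $\{y_ny_n^*\}$ forces $y_n^*y_m=y_ny_m^*=0$ for $n\ne m$ and makes the series norm-convergent (the tail of the partial sums has norm $\le\sup_{k\ge m}\|v_k\|^{1/2}\to0$), so $x^*x=\sum_n v_n=a$ and $xx^*=\sum_n y_ny_n^*\in\Her(b)$, i.e.\ $a\simle b$.

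The main obstacle is the inductive placement step. Although the cumulative ``rank budget'' $d_\tau(z)=\sum_{k\le n}d_\tau(v_k)$ stays $\le d_\tau(a)<d_\tau(b)$ with slack at least $2r$, strict comparison only yields Cuntz subequivalence, not an element of $\Her(b)$ genuinely \emph{orthogonal} to the essentially arbitrary corner $z$ already used; extracting such an orthogonal hereditary subalgebra $B_n$ of the right size is exactly where Lemma \ref{Lcontsc}/Corollary \ref{Ctwo} do the work, and turning $v_{n+1}\lesssim(\,\cdot\,)$ into an honest $\simle$ requires the usual cutoff bookkeeping, with the slight shrinking of $v_{n+1}$ absorbed into the vanishing tail. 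The hypothesis $\omega(a)=0$ is indispensable throughout: it provides the uniform gap $2r$, and, more importantly, it forces $d_\tau(v_n)\to0$ uniformly, so that the spectral band decomposition of $a$ consists of pieces of vanishing rank whose cumulative rank never reaches $d_\tau(b)$. (The hypothesis $d_\tau(b)<\infty$ only serves to keep all these comparisons between finite quantities.)
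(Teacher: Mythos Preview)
There is a genuine gap at the heart of your construction. You claim to obtain $y_{n+1}$ with $y_{n+1}^*y_{n+1}=v_{n+1}$ \emph{exactly}, invoking ``the standard device turning $\lesssim$ into $\simle$ after a cutoff''. But the standard device (R\o rdam's lemma, \cite[Lemma 2.2]{Rr1}) only yields $y^*y=f_\delta(v_{n+1})$ for some $\delta>0$, not $v_{n+1}$ itself; getting $y^*y=v_{n+1}$ on the nose from $v_{n+1}\lesssim(\cdot)$ is precisely what the theorem is trying to prove. Your remark that ``the slight shrinking of $v_{n+1}$ is absorbed into the vanishing tail'' does not repair this: if $y_n^*y_n=f_{\delta_n}(v_n)$ then $x^*x=\sum_n f_{\delta_n}(v_n)\neq a$, and norm-closeness of this sum to $a$ does not yield $a\simle b$. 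A second, related gap is the inductive orthogonal placement: given an arbitrary corner $z=\sum_{k\le n}y_ky_k^*\in\Her(b)$, nothing guarantees the existence of a hereditary $B_n\subset\Her(b)$ genuinely orthogonal to $z$ with $d_\tau(v_{n+1})<d_\tau(B_n)$ (if, say, $z$ happens to be strictly positive in $\Her(b)$ the orthogonal complement is zero); Lemma \ref{Lcontsc} and Corollary \ref{Ctwo} split off orthogonal pieces from $b$, not from an arbitrary complement of $z$.

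The paper's proof avoids both obstacles at once. It \emph{pre-arranges} the orthogonal targets before placing any pieces of $a$: one splits $\Her(b)$ into a large corner $\Her(b_1)$ absorbing a single big piece $a_1\approx a$, and a small corner $\Her(b_0)$ which, via Lemma \ref{Lcontsc}, is further split into a fixed infinite orthogonal sequence $\{b_{0,m}\}$ of rapidly decreasing rank $\sigma_m$. The spectral bands $a_{m+1}$ of $a$ are then chosen (using $\omega(a)=0$) thin enough that $d_\tau(a_{m+1})<\sigma_m$, so each goes into its pre-assigned $\Her(b_{0,m})$. Crucially, the pieces $a_i$ are \emph{cutoffs} $f_{1/16}(\cdot)$ obtained from R\o rdam's lemma and do \emph{not} sum to $a$; instead they are arranged so that $\sum_{i\le m}a_i\ge f_{\delta_m/2}(a)$ for all $m$. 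One then assembles the $x_i$'s not into an element of $A\otimes{\cal K}$ with $x^*x=a$, but into a bounded module map $T:H_c\to H_a$ whose range is dense (because it contains every $f_{\delta_m/2}(a)$), and invokes \cite[Corollary 3.9]{BL} to conclude $a\simle c\le b$. This Hilbert-module step is exactly what bridges the gap between ``cutoffs of $a$'' and ``$a$ itself'' that your argument leaves open.
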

(see Corollary \ref{Ccomparison}.)

\begin{proof}
We may assume that $A$ is non-elementary and $\|a\|=1=\|b\|.$

Pick a nonzero element $e\in {\rm Ped}(A)$ with $0\le e\le 1$ and $C={\rm Her}(e).$
Then $C$ is algebraically simple. By Brown's stable isomorphism theorem (\cite{B1}), 
$C\otimes {\cal K}\cong A\otimes {\cal K}.$ \Wlog, we may assume that $A$ is itself 
algebraically simple. By  \cite[Proposition 2.9]{FLosc}, $0\not\in \Qw.$ 
By Lemma \ref{Ltwo} and also Lemma \ref{Lcomapctcontain},
 there are non-zero elements $b_0, b_1'\in \Her(b)_+$ 
such that, for some $0<\eta<1/4,$  
\beq\label{34}
b_0b_1'=b_1'b_0=0\andeqn d_\tau(a)<d_\tau(f_{\eta}(b_1'))\rforal \tau\in \Qw.
\eneq
We may assume that $\|b_1'\|=\|b_0\|=1.$
Choose $b_1=f_{\eta}(b_1').$

Applying \ref{Lcontsc},  there are mutually orthogonal elements
$\{b_{0,n}\}\in \Her(b_0)_+$ with $\|b_{0,n}\|=1,$  $i=1,2,....$ 
such that $2^n[b_{0,n+1}]\le [f_\dt(b_{0,n})],$ $n\in\N$ (for some $0<\dt<1/4$).
Note that, since $\|b_{0,n}\|=1,$ $\tau(b_{0,n})\le d_\tau(b_{0,n})$
for all $\tau\in \Qw,$ $n\in \N.$
Put
\beq\label{sigma-n}
\sigma_n=(1/2)\inf\{\tau(b_{0,n}): \tau\in \Qw\}>0,\,\,n\in \N.
\eneq

Since $\omega(a)=0,$
choose $0<\dt_1'<1/4$ such that 
\beq\label{36}
\sup\{d_\tau(a)-f_{\dt_1'}(a): \tau\in \Qw\}<\sigma_1. 
\eneq
Now since $A$ has strict comparison, by \eqref{34}, $a\lesssim b_1.$
%
%
Put $\dt_1=\dt_1'/8.$ 
By applying \cite[Proposition 2.4, (iv)]{Rr1}, we obtain  $x_1\in A\otimes {\cal K}$ such 
that
\beq
x_1^*x_1=f_{\dt_1/4}(a)\andeqn x_1x_1^*\in 
{\rm Her}(b_1).
\eneq
%
%
Put  
$c_1=x_1x_1^*.$ 
Choose $0<\dt_2'<\dt_1/16$ such that (recall that $\omega(a)=0$)
\beq
\sup\{d_\tau(a)-f_{\dt_2'}(a): \tau\in \Qw\}<\sigma_2.
\eneq
Put $\dt_2=\dt_2'/8.$ 
Define $g_2\in C([0, \infty)]$ such that $0\le g_2(t)\le 1,$
$g_2(t)=1,$ if $t\in [\dt_2/4, \dt_1'/4],$ $g_2(t)=0,$ if $t\in [0, \dt_2/8]\cup [\dt_1'/2, \infty],$ 
and $g_2(t)>0$ if $t\not\in [0, \dt_2/8]\cup [\dt_1'/2, \infty].$ 
Define $a_{1,0}=f_{\dt_1'}(a),$ $a_1=f_{\dt_1/4}(a)$ and $a_2'=g_2(a).$ 
Note that (see Definition \ref{Dff})
\beq\label{Tcomplessapprox-10}
a_{1,0}\perp a_2'\andeqn a_2'\in \Her(f_{\dt_2/4}(a))_+.
\eneq
Moreover (by also \eqref{36})
\beq
d_\tau(a_2')<\sigma_1\rforal \tau\in \Qw.
\eneq
Since $A$ has strict comparison, by \eqref{sigma-n} and  Lemma 2.4  (iv) of \cite{Rr1}, there is $x_2\in A\otimes {\cal K}$  such that
\beq
x_2^*x_2=f_{1/16}(a_2')\andeqn x_2x_2^*\in \Her(b_{0,1}).
\eneq
Put $a_2=x_2^*x_2$ and $c_2=x_2x_2^*.$ 
Since, for each $t\in [\dt_2/4, \dt_1'/4],$ $f_{1/16}(g_2(t))=1,$ we compute that 
\beq
f_{\dt_1}(t)+f_{1/16}(g_2(t))\ge 1 \rforal t\in [\dt_2/4, \infty).
\eneq
We then   have 
\beq
f_{\dt_2/2}(a)\le a_1+a_2\le 2.
\eneq
Denote by $C^*(a)$ the commutative \SCA\, generated by 
a single element $a.$ 
By \eqref {Tcomplessapprox-10},
\beq
a_1+a_2\in C^*(a)\cap \Her(f_{\dt_2/4}(a)).
\eneq
We also have  (by \eqref{Tcomplessapprox-10} and by the fact that $b_1b_{1,0}=b_{1,0}=b_1$)
that  
\beq
a_{1,0}\perp a_2\andeqn
c_1\perp c_2.
\eneq

Suppose that we have constructed $a_1, a_2,..., a_m\in  C^*(a)_+^{\bf 1}\subset \Her(a)_+^{\bf 1},$ $c_1,c_2,...,c_m,$
$x_1, x_2,...,x_m\in A\otimes {\cal K},$ 
decreasing positive numbers $\{\dt_1', \dt_2',...,\dt_m'\}$ and $\{\dt_1, \dt_2, ...,\dt_m\}$ 
with $\dt_i=\dt_i'/8,$ $\dt_{i+1}'<\dt_i/8,$ $1\le i\le m,$ such that

(i) $c_1\in \Her(b_1)_+^{\bf 1},$ $c_{i+1}\in \Her(b_{0,i})_+^{\bf 1},$ $1\le i\le m-1;$

(ii) $f_{\dt_i/2}(a)\le \sum_{j=1}^m a_j\le 2,$ $1\le i\le m;$

(iii) $f_{\dt_i'}(a)\perp a_{i+1},$ $a_ia_{j}=0,$ $|j-i|\ge 2,\, 1\le i,j\le m,$
 and $\sum_{i=1}^ja_i\in \Her(f_{\dt_j/4}(a))$
for $1\le j\le m;$

(iv) $\sup\{d_\tau(a)-f_{\dt_i'}(a): \tau\in \Qw\}<\sigma_i,\,\,1\le i\le m,$ and 

(v) $x_i^*x_i=a_i\andeqn x_ix_i^*=c_i,$ $1\le i\le m.$

Since $\omega(a)=0,$ choose $0<\dt_{m+1}'<\dt_m/8$ such that
\beq
\hspace{-1.25in}{\rm (iv')}\hspace{0.4in}&&\hspace{0.4in}\sup\{d_\tau(a)-\tau(f_{\dt_{m+1}'}(a)): \tau\in \Qw\}<\sigma_{m+1}.
\eneq
Put $\dt_{m+1}=\dt_{m+1}'/8.$  Define $g_{m+1}\in C([0, \infty)]$ such that $0\le g_{m+1}(t)\le 1,$
$g_{m+1}(t)=1,$ if $t\in [\dt_{m+1}/4, \dt_{m}'/4],$ $g_{m+1}(t)=0,$ if $t\in [0, \dt_{m+1}/8]\cup [\dt_{m}'/2, \infty],$ 
and $g_{m+1}(t)>0$ if $t\not\in [0, \dt_{m+1}/8]\cup [\dt_m'/2, \infty].$ 
Define $a_{m,0}=f_{\dt_{m}'}(a)$ 
and $a_{m+1}'=g_{m+1}(a).$ 
Note that
\beq
a_{m,0}\perp a_{m+1}'\andeqn a_{m+1}'\in \Her(f_{\dt_{m+1}/4}(a))_+.
\eneq
Moreover (by (iv))
\beq
d_\tau(a_{m+1}')<\sigma_m\rforal \tau\in \Qw.
\eneq
Since $A$ has strict comparison, by \eqref{sigma-n} and Lemma 2.4 (iv) of \cite{Rr1}, there is $x_{m+1}\in A\otimes {\cal K}$  such that
\beq
x_{m+1}^*x_{m+1}=f_{1/16}(a_{m+1}')\andeqn x_{m+1}x_{m+1}^*\in \Her(b_{0,m}).
\eneq
Put $a_{m+1}=x_{m+1}^*x_{m+1}$ and $c_{m+1}=x_{m+1}x_{m+1}^*.$ 
Note that $a_1, a_2,...,a_{m+1}\in C^*(a)_+^{\bf 1}$ and $c_1, c_2,...,c_m, c_{m+1}$
are mutually orthogonal, and 

(i') $c_1\in \Her(b_1)_+^{\bf 1},$ $c_{i+1}\in \Her(b_{0,i})_+^{\bf 1},$ $1\le i\le m.$

Note that, for each $t\in [\dt_{m+1}/4, \dt_{m}'/4],$ $f_{1/16}(g_{m+1}(t))=1,$  we have 
\beq
f_{\dt_m}(t)+f_{1/16}(g_{m+1}(t))\ge 1 \rforal t\in [\dt_{m+1}/4, \infty).
\eneq
Hence, by (ii), 
\beq\label{II'}
\sum_{i=1}^m a_i+a_{m+1}\ge f_{\dt_m/2}(a)+a_{m+1}\ge f_{\dt_{m+1}/2}(a).
\eneq
Since $\dt_m'<\dt_{m-1}/8$ by (iii) (recall that $g_{m+1}(t)=0$ for all $t\in [\dt_m'/2, \infty)$),
\beq
\sum_{i=1}^{m-1}a_i\perp a_{m+1}\andeqn f_{\dt_m'}(a)\perp a_{m+1}.
\eneq
Hence $a_i a_{m+1}=0$ for any $1\le i\le m-1.$
It follows that, by (iii), 
\beq
\sum_{1\le i\le m+1,even} a_i\le 1\andeqn \sum_{i\le i\le m+1, odd} a_i\le 1.
\eneq
Hence  we have (first one is inequality is the same as \eqref{II'})

(ii') $f_{\dt_{m+1}/2}(a)\le \sum_{i=1}^{m+1} a_i\le 2$ and 

(iii') $f_{\dt_m'}(a)\perp a_{m+1},$ 
$a_i a_{j}=0,$  $|j-i|\ge 2,$ $1\le i\le m+1,$ and  $\sum_{i=1}^{m+1}a_i\in \Her(f_{\dt_{m+1}/4}(a))_+.$

We also have

(v') $x_{m+1}^*x_{m+1}=a_{m+1}$ and $x_{m+1}x_{m+1}^*=c_{m+1}.$ 

By (i') -- (v') above and by induction, we obtain a sequence of elements 
$\{a_m\},$ $\{c_m\},$ $\{x_m\}$ and a sequence of decreasing numbers 
$\{\dt_m'\}$ and $\{\dt_m\}$ such that (i'), (ii'), (iii'), (iv') and (v') hold.

Since $\{b_1, b_{0,1}, b_{0,2},...\}$ is a set of mutually orthogonal positive elements, by (i),
 we have $c_ic_j=0=c_jc_i$ if $i\not=j.$
Define  $c=\sum_{n=1}^\infty c_n/n.$ Then $c\in \Her(b)_+^{\bf 1}.$ 
Put $C=\Her(c)$  and define $E_{c,n}=\sum_{i=1}^n c_i^{1/n},$ $n\in \N.$
Then $\{E_{c,n}\}$ forms an approximate identity for $C.$

Next,  
since $c_ic_j=0$ if $i\not=j,$ we have 
$x_i^*x_j=0$ if $ i\not=j.$ Hence
\beq
(\sum_{i=1}^nx_i^*)(\sum_{i=1}^nx_i^*)^*=(\sum_{i=1}^nx_i^*)(\sum_{i=1}^nx_i)
=\sum_{i=1}^n x_i^*x_i=\sum_{i=1}^na_i.
\eneq
It follows from (ii) that 
$\|\sum_{i=1}^n x_i^*\|\le 2$ for all $n\in \N.$ 
Moreover, for each fixed $n,$ and any $m, k\in \N$ with $m\ge n,$    by (v),
\beq
(\sum_{i=1}^m x_i^*) E_{c,n}=(\sum_{i=1}^n x_i^*)(E_{c,n})\andeqn (\sum_{i=m+1}^{m+k}x_i^*)E_{c,n}=0.
\eneq
Recall also that $\lim_{n\to\infty}c'E_{c,n}=\lim_{n\to\infty} E_{c,n}c'=c'$
for all $c'\in C.$
It follows that, for any $c'\in C,$  $\{(\sum_{i=1}^m x_i^*)c'\}$ is Cauchy and 
\beq
\sum_{i=1}^m x_i^*c'\to yc'\,\,\,{\rm (in\,\, norm),}
\eneq
where $y=\sum_{i=1}^\infty x_i^*(\in LM(C, A\otimes {\cal K})).$ 

Write $H_a=\overline{a(A\otimes {\cal K})}$  and $H_c=\overline{c(A\otimes {\cal K})}.$
Then $H_a$ and $H_c$ are Hilbert $A\otimes {\cal K}$-modules (or closed right ideals of $A\otimes {\cal K}$). 
Define  $T: H_c\to H_a$ by $T(z)=yz$ for all $z\in H_c.$   Then 
$T\in B(H_c, H_a)$ (see \cite[Theorem 1.5]{Lnbd}).

For each $n,$ 
\beq
T(\sum_{i=1}^n c_i)^{1/m}(\sum_{i=1}^nx_i)=\sum_{i=1}^n x_i^*c_i^{1/m}x_i\in H_a.
\eneq
Since  $c_i=x_ix_i^*,$  we have that $x_i^*c_i^{1/m}x_i\to x_i^*x_i$ in norm. 
It follows that 
\beq
\sum_{i=1}^n a_i=\sum_{i=1}^n x_i^*x_i\in \overline{T(H_c)}.
\eneq
By (ii) above, 
\beq
f_{\dt_{n+1}/2}(a)\le \sum_{i=1}^n x_i^*x_i. 
\eneq 
Hence 
\beq
f_{\dt_{n+1}/2}(a)\in \overline{(\sum_{i=1}^n a_i)(A\otimes {\cal K})(\sum_{i=1}^n a_i)}\subset \overline{(\sum_{i=1}^n a_i)(A\otimes {\cal K})}
\subset \overline{T(H_c)}.
\eneq
It follows that $a\in \overline{T(H_c)}.$ Consequently 
$T(H_c)$ is dense in $H_a.$ 
Applying \cite[Corollary 3.9]{BL}, we get that $a\simle c.$ Since $c\in \Her(b),$
we finally conclude that 
\beq\nonumber
a\simle b.
\eneq
\end{proof}

\begin{cor}\label{Cnew}
Let $A$ be a $\sigma$-unital simple \CA\, with strict comparison 
and with a non-empty ${\widetilde{QT}}(A)$ which has a basis 
containing finitely many  extremal points.
Let $a, b\in (A\otimes {\cal K})_+$ be such that 
\beq\label{lessapprox-1}
d_\tau(a)<d_\tau(b)<\infty \rforal \tau\in {\widetilde{QT}}(A)\setminus \{0\}.
\eneq

Then $a\simle b.$
\end{cor}

\begin{proof}
Let $S\subset {\widetilde{QT}}(A)\setminus \{0\}$ be a basis for the cone 
${\widetilde{QT}}(A)$ and its extremal boundary $\partial_e(S)$ has only finitely many points.
Then $\widehat{[a]}$ must be continuous on $S$ and hence $\omega(a)=0.$
Therefore Theorem \ref{Tcomplessapprox}  applies (in fact, in this case, $A$ has tracial approximate oscillation zero,
and hence $A$ has stable rank one---see  Theorem 1.1 of \cite{FLosc}).
\end{proof}

\begin{NN}\label{NN2}
{\rm Let $A$ be a $\sigma$-unital simple \CA\, with a non-empty $S\subset \wtd{QT}(A)\setminus \{0\}$
such that $\R_+\cdot S=\wtd{QT}(A).$  If $\wtd{QT}(A)\not=\{0\},$ 
such $S$ exists.   For example,  choose $e\in {\rm Ped}(A\otimes {\cal K})_+\setminus \{0\}.$ 
Then $S_e=\{\tau\in \wtd{QT}(A): \tau(e)=1\}$ is a compact convex subset in $\wtd{QT}(A)\setminus \{0\}$
 such that
$\R_+\cdot S_e=\wtd{QT}(A).$ In fact $S_e$ is a basis for the cone $\wtd{QT}(A).$}

 {\rm Fix $S$ as above. Let $J=\{a\in A\otimes {\cal K}:  \sup\{d_\tau(a^*a): \tau\in S\}<\infty\}.$ Then $J$ is a two-sided  ideal of $A$ (not necessarily closed).
 Note that, if $a\in (A\otimes {\cal K})_+,$ then $f_{\ep/2}(a),\,\, f_\ep(a)\in {\rm Ped}(A\otimes {\cal K})$ for any 
 $0<\ep<\|a\|.$
 Hence $f_\ep(a)\in J.$ 
It follows that $J$ is dense in $A\otimes {\cal K},$ whence 
 $J$ contains ${\rm Ped}(A\otimes {\cal K}).$    One may ask when an element $a\in J$
 is in ${\rm Ped}(A\otimes {\cal K}).$  The next corollary gives a partial answer  (see also Corollary \ref{CPed2}).}
\end{NN}

The following is also used in the next section (see the proof of Lemma \ref{Ldig} and Corollary \ref{CPed2}).

\begin{cor}\label{CPedsen}
Let $A$ be a $\sigma$-unital simple \CA\, with $\wtd{QT}(A)\not=\{0\}$ and with strict comparison, and 
$x\in (A\otimes {\cal K}).$
If $d_\tau(x^*x)<\infty$ for all $\tau\in \wtd{QT}(A)$ and 
$\omega(x^*x)=0.$
Then $x\in {\rm Ped}(A\otimes {\cal K}).$
If  furthermore, $x\in A,$ then $x\in {\rm Ped}(A).$
\end{cor}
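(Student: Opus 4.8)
The plan is to realize $x$ (equivalently $|x| = (x^*x)^{1/2}$) inside a hereditary subalgebra that is itself a full corner cut down by a Pedersen-ideal element, and then invoke Theorem \ref{Tcomplessapprox} to move $x^*x$ into it. First I would set $a = x^*x$ and observe that by the polar decomposition $x = v|x|$ in $A^{**}$, it suffices to show $|x| \in {\rm Ped}(A\otimes{\cal K})$; indeed if $|x|\in{\rm Ped}$, then $f_\ep(|x|)$ is a local unit for $|x|$ for small $\ep$ up to arbitrarily small error, and $x = v|x| \in \overline{(A\otimes{\cal K})\,{\rm Ped}(A\otimes{\cal K})} = {\rm Ped}(A\otimes{\cal K})$ since the Pedersen ideal is an ideal. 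So the whole problem reduces to a statement about the single positive element $a = x^*x$ with $d_\tau(a)<\infty$ for all $\tau$ and $\omega(a)=0$: I want to conclude $a \in {\rm Ped}(A\otimes{\cal K})$.

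The key step is to produce $b \in {\rm Ped}(A\otimes{\cal K})_+$ with $d_\tau(a) < d_\tau(b)$ for all $\tau \in \wtd{QT}(A)\setminus\{0\}$. Granting such a $b$, Theorem \ref{Tcomplessapprox} (whose hypothesis $\omega(a)=0$ is exactly what we have assumed) gives $a \simle b$, i.e. there is $y$ with $y^*y = a$ and $yy^* \in \Her(b)$; since $\Her(b) \subseteq {\rm Ped}(A\otimes{\cal K})$ (a hereditary subalgebra of a Pedersen-ideal element sits inside the Pedersen ideal) and $a = y^*y$ is Cuntz equivalent — in fact $\simle$-equivalent — to $yy^*$, we get $a \in \Her(yy^*)^{\sim} \subseteq {\rm Ped}(A\otimes{\cal K})$; more carefully, $yy^* \in {\rm Ped}$ forces $y \in {\rm Ped}$ (again because ${\rm Ped}$ is an ideal and $y = y^*y\,(\text{stuff})$ via continuous functional calculus on $y^*y$, or simply $y^*y \in \Her(yy^*)^{1/2}\cdot(\cdots)$), hence $a = y^*y \in {\rm Ped}$. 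To build $b$: pick a nonzero $e \in {\rm Ped}(A\otimes{\cal K})_+$; by simplicity $d_\tau(e)>0$ for all $\tau\ne 0$, but $d_\tau(e)$ need not dominate the finite continuous function $d_\tau(a)$. However, the function $d_\tau(a)$ is continuous on a compact base $S = S_e$ of the cone (as in Notation \ref{NN2}), hence bounded there by some $M<\infty$; choosing $n\in\N$ with $M < n$, the element $b = \diag(e,e,\dots,e)$ ($n$ copies, sitting in $A\otimes{\cal K}$) lies in ${\rm Ped}(A\otimes{\cal K})$ and satisfies $d_\tau(b) = n\,d_\tau(e) \ge n \cdot 1 > M \ge d_\tau(a)$ for $\tau \in S$, and this extends to all of $\wtd{QT}(A)\setminus\{0\}$ by scaling since both sides are homogeneous of degree one. (One uses here that $\omega(a)=0$ gives $d_\tau(a)$ continuous on $S$, so $M$ is genuinely finite.)

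For the final sentence "if furthermore $x\in A$, then $x\in{\rm Ped}(A)$": once we know $x \in {\rm Ped}(A\otimes{\cal K})$ and $x \in A = A\otimes e_{1,1}$, we have $x \in {\rm Ped}(A\otimes{\cal K}) \cap A$; and it is standard that ${\rm Ped}(A\otimes{\cal K}) \cap (A\otimes e_{1,1}) = {\rm Ped}(A)\otimes e_{1,1}$, because the Pedersen ideal is the minimal dense ideal and $A\otimes e_{1,1}$ is a hereditary (indeed full corner) subalgebra, so ${\rm Ped}(A\otimes{\cal K})\cap A = {\rm Ped}(A)$ via Pedersen's characterization of the minimal dense ideal and its compatibility with hereditary subalgebras. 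This gives $x \in {\rm Ped}(A)$.

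The main obstacle I anticipate is the bookkeeping around the reduction from $x$ to $|x| = (x^*x)^{1/2}$ and then from $|x|$ back to $a = x^*x$ — one must be careful that the Pedersen ideal is preserved under the continuous functional calculus steps and under the $\simle$-relation, i.e. that $yy^* \in {\rm Ped}$ truly forces $y \in {\rm Ped}$ and $y^*y \in {\rm Ped}$. This is where one leans on the fact that ${\rm Ped}(A\otimes{\cal K})$ is a two-sided (not necessarily closed) ideal together with the identity $y = \lim_{\ep\to 0} y f_\ep(y^*y)$ in norm when $y^*y \in {\rm Ped}$ (which itself follows once $yy^*\in{\rm Ped}$ because $f_\ep(y^*y)$ is Cuntz-below $f_{\ep/2}(yy^*)\in{\rm Ped}$ up to the ideal property). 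The construction of $b$ and the application of Theorem \ref{Tcomplessapprox} should otherwise be routine.
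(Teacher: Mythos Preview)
Your approach is essentially the same as the paper's: both set $a=x^*x$, manufacture a Pedersen-ideal element whose dimension function strictly dominates $\widehat{[a]}$, invoke Theorem~\ref{Tcomplessapprox} to obtain $y$ with $y^*y=a$ and $yy^*$ in the hereditary subalgebra of that element, and conclude $a\in{\rm Ped}(A\otimes{\cal K})$. The only cosmetic differences are: (i)~the paper takes $b=\diag(e,e/2,e/3,\ldots)$ so that $d_\tau(b)=\infty$, then targets $f_\delta(b)$ (which automatically carries the local unit $f_{\delta/2}(b)$), whereas you use a finite diagonal $\diag(e,\ldots,e)$ after bounding $d_\tau(a)$ on the compact base $S_e$; and (ii)~for the ``furthermore $x\in A$'' clause the paper writes $a=\sum y_i^*dy_i$ explicitly (algebraic simplicity of ${\rm Ped}$) and compresses by $e_{1,1}$, while you cite ${\rm Ped}(A\otimes{\cal K})\cap A={\rm Ped}(A)$.

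One caution about your anticipated obstacle: the fix you propose for ``$yy^*\in{\rm Ped}\Rightarrow y,\,y^*y\in{\rm Ped}$'' via the norm limit $y=\lim_{\ep\to0}yf_\ep(y^*y)$ does not work as written, because ${\rm Ped}$ is not norm-closed. The clean argument (implicit in the paper's choice of target $f_\delta(b)$) is that the local unit $c'=f_{\delta/2}(b)$ satisfies $c'(yy^*)=yy^*$, hence $c'y=y$, so $y=c'y\in{\rm Ped}\cdot(A\otimes{\cal K})\subseteq{\rm Ped}$ and $a=y^*c'y\in{\rm Ped}$. If you keep your finite-diagonal $b$, just choose $e$ of the form $f_\eta(c)$ (so that $b$ inherits a local unit $\diag(f_{\eta/2}(c),\ldots,f_{\eta/2}(c))$) and the same one-line argument goes through.
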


\begin{proof}
Put $a=x^*x.$
Choose a nonzero element $e\in A_+.$ 
Consider $b=\diag(e, e/2,...,e/n,...)\in (A\otimes {\cal K})_+.$
Then 
$d_\tau(b)=\infty$ for all $\tau\in \wtd{QT}(A)\setminus \{0\}.$ There exist $\dt>0$ such that
\beq
d_\tau(a)<d_\tau(f_\dt(b))\rforal \tau\in \wtd{QT}(A)\setminus \{0\}.
\eneq
By Theorem \ref{Tcomplessapprox}, there exists $y\in A\otimes {\cal K}$ such that
\beq
y^*y=a\andeqn yy^*\in \overline{f_\dt(b)A f_\dt(b)}.
\eneq
Since $f_\dt(b)\in {\rm Ped}(A\otimes {\cal K})_+,$ 
we have $a\in {\rm Ped}(A\otimes {\cal K})_+.$ Hence $x\in {\rm Ped}(A\otimes {\cal K}).$

If $a\in A_+,$ 
choose $d\in {\rm Ped}(A)_+\setminus \{0\}.$ Then $d\in {\rm Ped}(A\otimes {\cal K})_+.$
Since $a\in {\rm Ped}(A\otimes {\cal K}),$ there are $y_1, y_2,...,y_m\in A\otimes {\cal K}$ such that
$
a=\sum_{i=1}^m y_i^*d y_i.
$
Consider a system of matrix units $\{e_{i,j}\}$ for ${\cal K}.$ 
We may view $\{e_{i,j}\}\subset \wtd A\otimes {\cal K}.$
Then 
\beq
a=e_{1,1}ae_{1,1}=\sum_{i=1}^m (e_{1,1} y_ie_{1,1})^*d (e_{1,1}ye_{1,1}).
\eneq
Since $z_i=e_{1,1}y_ie_{1,1}\in A,$ we conclude that $a\in {\rm Ped}(A),$ whence $x\in {\rm Ped}(A).$
\end{proof}

\begin{cor}\label{Cmodule}
Let $A$ be a $\sigma$-unital simple \CA\, with strict comparison.
Suppose that $H_a$ and $H_b$ are  countably generated Hilbert $A$-modules
such that
\beq
\omega(H_a)=0\andeqn d_\tau(H_a)<d_\tau(H_b)\rforal \tau\in \wtd{QT}(A)\setminus \{0\}
\eneq
(see Definition \ref{Dmodule}). 
Then $H_a$ is unitarily equivalent to a Hilbert $A$-submodule of $H_b.$
\end{cor}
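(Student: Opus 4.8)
The plan is to deduce the corollary from Theorem \ref{Tcomplessapprox} by passing between countably generated Hilbert $A$-modules and positive elements of $A\otimes {\cal K}$, and, for the final step, between the relation $\simle$ and unitary inclusions of modules. I would begin by fixing strictly positive elements $a\in K(H_a)$ and $b\in K(H_b)$; then $K(H_a),K(H_b)$ are $\sigma$-unital hereditary \SCA s of $A\otimes {\cal K}$ and, under the standing identifications of Definition \ref{Hild}, $H_a=\overline{a(l^2(A))}$ and $H_b=\overline{b(l^2(A))}$, while by Definition \ref{Dmodule} the hypotheses become $\omega(a)=0$ and $d_\tau(a)<d_\tau(b)$ for all $\tau\in\wtd{QT}(A)\setminus\{0\}$ (we assume, as in Theorem \ref{Tcomplessapprox}, that $A$ is not purely infinite). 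The only real discrepancy with the hypotheses of Theorem \ref{Tcomplessapprox} is that $d_\tau(b)$ need not be finite, and I would remove this exactly as in the proof of Corollary \ref{CPedsen}: pass to an algebraically simple corner so that $\wtd{QT}(A)=\R_+\cdot\Qw$ with $\Qw$ compact, and use that $\omega(a)=0$ makes $d_\tau(a)$ continuous (hence bounded) on $\Qw$, while $\tau\mapsto\tau(f_{1/n}(b))$ is continuous, increasing in $n$, with pointwise supremum $d_\tau(b)>d_\tau(a)$; Lemma \ref{Lcomapctcontain} then gives $n_0$ with $\tau(f_{1/n_0}(b))>d_\tau(a)$ on $\Qw$, whence $d_\tau(f_{1/(2n_0)}(b))>d_\tau(a)$, and $f_{1/(2n_0)}(b)\in{\rm Ped}(A\otimes {\cal K})$ forces $d_\tau(f_{1/(2n_0)}(b))<\infty$. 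Replacing $b$ by $f_{1/(2n_0)}(b)\in\Her(b)$ (which only shrinks $H_b$) and using homogeneity of $d_\tau$ to carry the inequalities back to all of $\wtd{QT}(A)\setminus\{0\}$, I may assume $d_\tau(a)<d_\tau(b)<\infty$ for all $\tau$.

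With this in hand I would apply Theorem \ref{Tcomplessapprox}: since $A$ has strict comparison and $\omega(a)=0$, it yields $a\simle b$, i.e.\ an $x\in A\otimes {\cal K}$ with $x^*x=a$ and $c:=xx^*\in\Her(b)$. As $c\in\Her(b)$ we get $\Her(c)\subseteq\Her(b)$, hence $H_c:=\overline{c(l^2(A))}$ is a closed Hilbert $A$-submodule of $H_b$, and it only remains to exhibit a unitary $H_a\to H_c$. For this I would use the polar decomposition $x=v|x|$ in $(A\otimes {\cal K})^{**}$, where $|x|=a^{1/2}$, $v^*v$ is the support projection of $a$ and $vv^*$ that of $c$, together with the identities $x=va^{1/2}=c^{1/2}v$. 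The first shows that left multiplication by $v$ carries $a^{1/2}(l^2(A))$ into $l^2(A)$, and the identity $\langle x\xi,x\xi\rangle=\langle\xi,a\xi\rangle=\langle a^{1/2}\xi,a^{1/2}\xi\rangle$ shows it extends to an $A$-linear isometry $U$ of $H_a=\overline{a^{1/2}(l^2(A))}$ onto $\overline{x(l^2(A))}$; the identities $x=c^{1/2}v$ and $c=xx^*$ give $\overline{x(l^2(A))}=\overline{c(l^2(A))}=H_c$. Finally $U$ is adjointable with adjoint implemented by $v^*$ on $H_c$, so $U^*U=\id_{H_a}$ and $UU^*=\id_{H_c}$; composing $U$ with the inclusion $H_c\hookrightarrow H_b$ completes the argument.

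All the real work is in Theorem \ref{Tcomplessapprox}, so for the corollary itself I do not anticipate a serious obstacle; the two points that require care are the reduction to finite $d_\tau(b)$ in the first paragraph (which is precisely why one needs $\omega(a)=0$, to get continuity of $d_\tau(a)$ and then apply the Dini-type Lemma \ref{Lcomapctcontain}), and, in the last step, keeping the left action, the right action and adjointability straight when turning $a\simle b$ into a unitary inclusion of modules. If preferred, this last translation can be cited from the Hilbert-module machinery already used in the paper (in the spirit of \cite[Corollary 3.9]{BL} and \cite{CEI}), in which case the proof reduces to: realize the modules by positive elements, reduce to $d_\tau(b)<\infty$, and quote Theorem \ref{Tcomplessapprox}.
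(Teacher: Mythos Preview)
Your proposal is correct and follows essentially the same approach as the paper: apply Theorem \ref{Tcomplessapprox} to get $a\simle b$, then translate this into a unitary inclusion of Hilbert modules. The paper's own proof is a one-line citation of Theorem \ref{Tcomplessapprox} together with Corollary 3.9 and (1) of Theorem 3.6 of \cite{BL} for the module step, whereas you spell out both the reduction to $d_\tau(b)<\infty$ (via Lemma \ref{Lcomapctcontain}, a point the paper leaves implicit) and the polar-decomposition construction of the unitary; your final remark that this last step can simply be cited from \cite{BL} is exactly what the paper does.
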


\begin{proof}
This follows from Theorem \ref{Tcomplessapprox} above, and Corollary 3.9 and (1) of Theorem 3.6
of \cite{BL}.
\end{proof}

\section{Tracial approximate oscillation zero}

\begin{lem}\label{Lorth}
Let $A$ be a $\sigma$-unital algebraically simple \CA\, with strict comparison
which is not purely infinite.
Suppose that $e,\, a\in {\rm Ped}(A\otimes {\cal K})_+^{\bf 1}$ such that
$\omega(a)=0$ and $d_\tau(a)< d_\tau(e)$
for all $\Qw.$  Then, for any $\ep>0,$ there is 
$b\in \Her(e)_+^{\bf 1}$ such that

(1) $d_\tau(a)-\ep
<\tau(b)\le d_\tau(a)$
for all $\tau\in \Qw$ and 

(2) $\omega(b)<\ep/2,$ 

(3) $d_\tau(e)-d_\tau(a)\le d_\tau(e_C)\le \tau(q)
< d_\tau(e)-d_\tau(a) +\ep$ for all $\tau\in \Qw,$

\noindent
where $e_C$ is a positive element in $\Her(b)^\perp\cap \Her(e)$ and 
$q$ is the open projection associated with $\Her(b)^\perp\cap \Her(e).$
\end{lem}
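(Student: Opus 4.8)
The plan is to reduce the statement to the construction of two mutually orthogonal elements $b,e_C\in\Her(e)_+^{\bf 1}$ (with $e_C\neq 0$) such that
(i) $d_\tau(a)-\ep<\tau(b)\le d_\tau(a)$ for all $\tau\in\Qw$,
(ii) $\omega(b)<\ep/2$, and
(iii) $d_\tau(e_C)\ge d_\tau(e)-d_\tau(a)$ for all $\tau\in\Qw$.
Granting such $b$ and $e_C$, items (1) and (2) of the Lemma are immediate, while for (3) I would let $q$ be the open projection of $D:=\Her(b)^\perp\cap\Her(e)$. Since $e_C\in D$ we have $d_\tau(e_C)\le\tau(q)$; on the other hand the support projection $p_b$ of $b$ and $q$ are mutually orthogonal open projections dominated by the support projection $p_e$ of $e$, so $\tau(q)\le\tau(p_e)-\tau(p_b)=d_\tau(e)-d_\tau(b)\le d_\tau(e)-\tau(b)<d_\tau(e)-d_\tau(a)+\ep$ by (i), while $\tau(q)\ge d_\tau(e_C)\ge d_\tau(e)-d_\tau(a)$ by (iii). (Here $d_\tau(e)<\infty$ since $e\in{\rm Ped}(A\otimes{\cal K})$.) This is precisely (3) with this $e_C$.

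To produce $b$: since $\omega(a)=0$ and $d_\tau(a)<d_\tau(e)<\infty$ for all $\tau\in\Qw$, Theorem \ref{Tcomplessapprox} provides $x\in A\otimes{\cal K}$ with $x^*x=a$ and $b_0:=xx^*\in\Her(e)$. Then $b_0\sim a$, so $d_\tau(b_0)=d_\tau(a)$, $\omega(b_0)=0$, $\|b_0\|=\|a\|\le 1$, and $b_0\in{\rm Ped}(A\otimes{\cal K})$ (apply Corollary \ref{CPedsen} to $x^*$, or use that $\Her(e)\subseteq{\rm Ped}(A\otimes{\cal K})$ as $\Her(e)$ is algebraically simple). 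Because $\omega(b_0)=0$, i.e.\ $\lim_n\sup_{\tau\in\Qw}(d_\tau(b_0)-\tau(f_{1/n}(b_0)))=0$, fix $0<\eta<1/4$ with $\sup_{\tau\in\Qw}(d_\tau(b_0)-\tau(f_{2\eta}(b_0)))<\ep/2$ and set $b:=f_\eta(b_0)\in\Her(b_0)_+^{\bf 1}\subseteq\Her(e)_+^{\bf 1}$. Using the spectral estimates $f_{2\eta}(b_0)\le\chi_{(\eta/2,\infty)}(b_0)\le f_{\eta/2}(b_0)$, the identity $d_\tau(b)=\tau(\chi_{(\eta/2,\infty)}(b_0))$, and $f_{1/k}(b)\ge\chi_{[\eta,\infty)}(b_0)\ge f_{2\eta}(b_0)$ for every $k$, one gets, for all $\tau\in\Qw$, both $d_\tau(a)-\ep/2<\tau(f_{2\eta}(b_0))\le\tau(b)\le d_\tau(b)\le d_\tau(b_0)=d_\tau(a)$, which is (i), and $d_\tau(b)-\tau(f_{1/k}(b))\le\tau(f_{\eta/2}(b_0))-\tau(f_{2\eta}(b_0))\le d_\tau(b_0)-\tau(f_{2\eta}(b_0))<\ep/2$ for all $k$, which gives $\omega(b)<\ep/2$, i.e.\ (ii).

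The essential point is (iii): producing $e_C\perp b$ in $\Her(e)$ with $d_\tau(e_C)\ge d_\tau(e)-d_\tau(a)$ for all $\tau$. Since $d_\tau(e)-d_\tau(a)$ is lower semicontinuous, strictly positive, and hence bounded below by a positive constant on the compact set $\Qw$, what is needed is to realize a fixed ``amount of dimension'' orthogonally to $b$ inside $\Her(e)$. The naive candidate — a strictly positive element of $\Her(b)^\perp\cap\Her(e)$ — works only if that complement is large enough, i.e.\ only if $\tau(q)\ge d_\tau(e)-d_\tau(a)$, equivalently only if the additivity $\tau(p_b)+\tau(q)=d_\tau(e)$ holds; this amounts to the (essential) hereditary subalgebra $\Her(b)\oplus(\Her(b)^\perp\cap\Her(e))$ of the algebraically simple algebra $\Her(e)$ not losing any quasitrace mass to the ``boundary'' projection $p_e-p_b-q$, which can fail for a badly placed copy of $a$. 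I expect this to be the main obstacle, and I would address it by constructing the implementing element carefully rather than taking whatever Theorem \ref{Tcomplessapprox} returns: using Lemma \ref{Ltwo} to peel off room in $\Her(e)$ and then Theorem \ref{Tcomplessapprox} to embed a Cuntz copy of $a$ together with a prescribed orthogonal complement of dimension function at least $d_\tau(e)-d_\tau(a)$, the existence of this complement being forced by strict comparison together with $\omega(a)=0$ and $e\in{\rm Ped}(A\otimes{\cal K})$. Once $e_C$ (orthogonal to $b$) is in hand, the bookkeeping of the first paragraph closes the argument.
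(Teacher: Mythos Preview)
Your treatment of (1), (2), and the bookkeeping paragraph for (3) is correct and matches the paper's argument. The genuine gap is exactly where you flag it: your construction of $e_C$ is only a sketch, and the plan ``use Lemma~\ref{Ltwo} to peel off room and then embed $a$ together with a prescribed orthogonal complement'' does not come with a mechanism that forces the complement to have dimension at least $d_\tau(e)-d_\tau(a)$. Lemma~\ref{Ltwo} gives you two orthogonal pieces $b_0,b_1\in\Her(e)$ with $d_\tau(a)<d_\tau(b_1)$, but $b_0$ can be arbitrarily small; nothing in that lemma, or in Theorem~\ref{Tcomplessapprox}, hands you a complement with the required lower bound.

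The paper's fix is a one-line strengthening of your first step that makes (iii) automatic. Since $\omega(a)=0$, the function $\tau\mapsto d_\tau(a)$ is continuous on the compact set $\Qw$, so by Lemma~\ref{Lcomapctcontain} there is $\eta>0$ with $d_\tau(a)<d_\tau(f_\eta(e))$ for all $\tau$. Now apply Theorem~\ref{Tcomplessapprox} to land $xx^*$ in $\Her(f_\eta(e))$ rather than merely in $\Her(e)$. With $b=f_{1/2n}(xx^*)$ and $b'=f_{1/4n}(xx^*)$ (your $f_\eta(b_0)$ and a slightly larger cutoff), choose $g\in C_0([0,\infty))_+$ with $g(t)=1$ for $t\ge\eta/2$; then $g(e)$ is a strictly positive element of $\Her(e)$ acting as a unit on $\Her(f_\eta(e))$, so $e_C:=g(e)-b'\ge 0$ and $b\,e_C=b-b\,b'=0$. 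Writing $p_\eta$ for the spectral projection of $e$ on $(\eta/2,\|e\|]$ one has $p_\eta b'=b'$ and
\[
(g(e)-b')^{1/m}\ \ge\ \big((1-p_\eta)g(e)\big)^{1/m}+(p_\eta-b')\ \nearrow\ (p_e-p_\eta)+(p_\eta-b')=p_e-b',
\]
whence $d_\tau(e_C)\ge\tau(p_e)-\tau(b')\ge d_\tau(e)-d_\tau(a)$. This is exactly the additivity $\tau(p_b)+\tau(q)=d_\tau(e)$ you were worried about: it holds because $b'$ lives inside $\Her(f_\eta(e))$, so the ``boundary'' mass you anticipated cannot occur. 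Once you make this single adjustment---embed into $\Her(f_\eta(e))$ instead of $\Her(e)$---your argument goes through verbatim.
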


\begin{proof}
Since $d_\tau(a)$ is continuous on $\Qw,$ by Lemma \ref{Lcomapctcontain}, there exists $0<\eta<1/4$ such that
\beq
d_\tau(a)<d_\tau(f_\eta(e))\rforal \tau\in \Qw.
\eneq
By Theorem \ref{Tcomplessapprox}, there exists $x\in A\otimes {\cal K}$ such that
\beq
x^*x=a\andeqn xx^*\in \Her(f_\eta(e)).
\eneq
Since $\omega(a)=0,$ by \cite[Proposition 4.2]{FLosc},
we also have $\omega(xx^*)=0.$ One then may
choose $n(1)\in \N$ such that, for all $n\ge n(1),$  
\beq\label{Ldig-e-5}
d_\tau(xx^*)-\tau(f_{1/n}(xx^*))<\ep/2\rforal \tau\in \Qw.
\eneq
Put $b=f_{1/2(n(1))}(xx^*)$  and $b'=f_{1/4n(1)}(xx^*).$ 
Since $d_\tau(a)=d_\tau(xx^*)$ for all $\tau\in \Qw,$ we obtain  (1).

Since, for any $0<\eta<1/4,$
\beq
d_\tau(b)-\tau(f_\eta(b))\le d_\tau(xx^*)-\tau(f_{1/n(1)}(xx^*))<\ep/2\rforal \tau\in \Qw,
\eneq
we obtain  that 
$$
\omega(b)<\ep/2.
$$ 
So (2) holds.

Let $p$ be the open projection of $A^{**}$ associated with $\Her(e).$
Define $g\in C([0, \infty])$ with $0<g(t)\le 1$ for all $t\in (0, \infty),$ $g(0)=0,$
$g(t)=1$ if $t\in [\eta/2, \infty).$
Define $e_C=g(e)-b'.$
Then 
\beq
be_C=b(g(e)-b')=b-bb'=0=e_Cb.
\eneq
In other words, $e_C\in \Her(b)^\perp\cap \Her(e).$

Let $p_\eta$ be the spectral projection (in $A^{**}$) of $e$ 
corresponding to the open set $(\eta/2, \|e\|].$ 
Note that $p_\eta b'=b'=b'p_{\eta}.$ 
For each $m\in \N,$
\beq
\hspace{-0.4in}(g(e)-b')^{1/m}&=&((1-p_\eta)g(e))^{1/m}+(p_\eta (g(e)-b'))^{1/m}\\
&\ge & ((1-p_\eta)g(e))^{/1m}+(p_\eta-b').
\eneq
Therefore 
\beq\nonumber
\sup\{\tau(g(e)-b')^{1/m}):\tau\in \Qw\}\ge 
\sup\{\tau(((1-p_\eta)g(e))^{/1m}+(p_\eta-b')):\tau\in \Qw\}.
\eneq
Note that 
\beq
((1-p_\eta)g(e))^{/1m}+(p_\eta-b')
\nearrow (p-p_\eta)+(p_\eta-b')=p-b'.
\eneq
It follows that, for all $\tau\in \Qw,$ 
\beq
d_\tau(e_C)=d_\tau(g(e)-b')\ge \tau(p-b')=\tau(p)-\tau(b')\ge \tau(p)-d_\tau(a)=d_\tau(e)-d_\tau(a).
\eneq
%
%
%
Since $e\in {\rm Ped}(A\otimes {\cal K}),$ $d_\tau(e)=\tau(p)<\infty$
for all $\tau\in \wtd{QT}(A)$ (see \cite[(2) of Proposition 2.10]{FLosc}).
Also,
since 
$d_\tau(e)-\tau(q)\ge d_\tau(b)$ for all $\tau\in\Qw,$  we have, by \eqref{Ldig-e-5},
\beq
\tau(q)\le d_\tau(e)-d_\tau(b)
&<& d_\tau(e)-d_\tau(a)
+\ep/2.
\eneq

\end{proof}

The following is a folklore.  Note that, if $\Delta$ is metrizable, then, for any strictly 
positive lower semicontinuous affine function $g,$ there exists an increasing sequence 
of strictly positive continuous affine functions $\{h_n\}$ such that
$h_n\nearrow g$ pointwisely on $\Delta$ (see Corollary I.1.4 of \cite{Alf}).

\begin{prop}\label{Psincrease}
Let $\Delta$ be a compact convex set and $g$ be a positive lower semicontinuous affine 
function defined on $\Delta$ 
such that there exists a sequence  of  strictly positive continuous affine 
functions $h_n$ such that $h_n\nearrow g$ pointwisely. 
Then there exists a sequence  of strictly positive continuous affine functions 
$g_n$
such that
\beq
g=\sum_{n=1}^\infty g_n,
\eneq
where the infinite series converges pointwisely on $\Delta.$
\end{prop}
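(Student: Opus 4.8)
The plan is to reduce the statement to the construction of a \emph{strictly} increasing sequence of strictly positive continuous affine functions converging pointwise to $g$, and then telescope. Indeed, suppose we have produced continuous affine functions $f_n$ on $\Delta$ with $f_n(t)<f_{n+1}(t)$ for every $t\in\Delta$, with $f_n(t)>0$ for all $t$, and with $f_n\le g$ and $f_n\to g$ pointwise. Setting $g_1:=f_1$ and $g_n:=f_n-f_{n-1}$ for $n\ge2$, each $g_n$ is continuous and affine, and strictly positive by the strict monotonicity of $\{f_n\}$; moreover $\sum_{k=1}^n g_k=f_n$, so $\sum_{k=1}^\infty g_k=g$ pointwise on $\Delta$. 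Thus the whole matter is the construction of such an $\{f_n\}$.

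To build $\{f_n\}$ I would perturb the given sequence $\{h_n\}$ multiplicatively. Fix numbers $\alpha_n\in(0,1)$ with $\alpha_n\to0$, say $\alpha_n<1/n$. Put $f_1:=(1-\alpha_1)h_1$, which is strictly positive, continuous and affine, and satisfies $f_1<h_1$ since $h_1>0$ on $\Delta$. Inductively, assume $f_n=(1-\alpha_n)h_n$ has been chosen so that $f_{n-1}<f_n$ on $\Delta$. Since $h_n>0$ on $\Delta$ and $\alpha_n>0$, we have $f_n=(1-\alpha_n)h_n<h_n\le h_{n+1}$ on $\Delta$, so the continuous function $h_{n+1}-f_n$ is strictly positive on the compact set $\Delta$, hence bounded below by some constant $c_n>0$. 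As $h_{n+1}$ is continuous on the compact $\Delta$, its supremum is finite; choose $\alpha_{n+1}\in(0,1/(n+1))$ small enough that $\alpha_{n+1}\,\sup_{\Delta}h_{n+1}<c_n$. Then $f_{n+1}:=(1-\alpha_{n+1})h_{n+1}$ satisfies $f_{n+1}>f_n$ on all of $\Delta$, and is again strictly positive, continuous and affine. This yields the required strictly increasing sequence; note $f_n=(1-\alpha_n)h_n\le h_n\le g$ throughout.

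It remains to verify pointwise convergence $f_n\to g$. Fix $t\in\Delta$. If $g(t)<\infty$, then $h_n(t)\to g(t)$ is a bounded sequence, so $f_n(t)=h_n(t)-\alpha_n h_n(t)\to g(t)$ because $\alpha_n h_n(t)\to0$. If $g(t)=\infty$, then $h_n(t)\to\infty$ and, since $1-\alpha_n\ge 1-\alpha_1>0$, also $f_n(t)\ge(1-\alpha_1)h_n(t)\to\infty=g(t)$. Together with $f_n\le g$ this gives $f_n\nearrow g$ pointwise, completing the construction and hence the proof.

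The only points requiring care are the uniform (compactness) choice of $\alpha_{n+1}$, which is what guarantees the strict inequality $f_{n+1}>f_n$ simultaneously at all points of $\Delta$ rather than merely at each point separately, and the bookkeeping in the case $g(t)=\infty$; both are handled above, so I do not expect a genuine obstacle. The device is essentially the same multiplicative-telescoping trick used elsewhere to pass from surjectivity of $\Gamma$ to decompositions of lower semicontinuous functions into sums of continuous ones.
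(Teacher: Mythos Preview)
Your proof is correct and follows essentially the same route as the paper: multiplicatively shrink each $h_n$ to $f_n=(1-\alpha_n)h_n$ with $\alpha_n\to 0$ chosen (via compactness) small enough to force $f_{n+1}>f_n$ on all of $\Delta$, then telescope $g_1=f_1$, $g_n=f_n-f_{n-1}$. Your treatment of the convergence $f_n\to g$, including the case $g(t)=\infty$, is in fact more explicit than the paper's.
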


\begin{proof}

Put $f_1=(1/2)h_1$
and $\lambda_1=1/2.$  Then $h_2(\tau)>f_1(\tau)$
for all $\tau\in \Delta.$ Since $f_1$ is continuous and $\Delta$ is compact, one may 
choose $0<\lambda_2<1/2^{2+1}$ such that
$(1-\lambda_2)h_2(\tau)>f_1(\tau)$ for all $\tau\in \Delta$ (see Lemma \ref{Lcomapctcontain}).  Put $f_2=(1-\lambda_2)h_2.$ 
Suppose that $0<\lambda_i<1/2^{2+i}$ is chosen, $i=1,2,...,n,$ such 
that $(1-\lambda_{i+1})h_{i+1}(\tau)> (1-\lambda_i)h_i(\tau)$ for all $\tau\in \Delta,$ $i=1,2,...,n-1.$
Then $h_{n+1}(\tau)>(1-\lambda_n)h_n(\tau)$ for all $\tau\in \Delta.$ Choose 
$0<\lambda_{n+1}<1/2^{2+n}$ such that $(1-\lambda_{n+1})h_{n+1}(\tau)>(1-\lambda_n)h_n(\tau)$ 
for all $\tau\in \Delta.$ Define $f_{n+1}=(1-\lambda_{n+1})h_{n+1}.$
Then $f_{n+1}$ is a strictly positive continuous affine function on $\Delta.$
By induction, we obtain a strictly increasing sequence $\{f_n\}$
of strictly positive continuous affine functions 
such that $f_n\nearrow g$ pointwisely on $\Delta.$

Define $g_1=h_1$ and $g_n=f_n-f_{n-1}$ for $n\ge 2.$ 
Then each $g_n$  is a strictly positive continuous affine function on $\Delta$
and $g=\sum_{n=1}^\infty g_n$ (converges pointwisely on $\Delta$). 


\end{proof}

\begin{lem}\label{Ldig}
Let $A$ be a $\sigma$-unital algebraically simple \CA\, with strict comparison and 
surjective canonical 
map $\Gamma$ (which is not purely infinite).
Suppose that $e\in {\rm Ped}(A\otimes K)_+^{\bf 1}$ is  a nonzero element.
Then, for any $\ep>0,$  there is $e_0\in {\rm Her}(e)_+^{\bf 1}$ 
satisfying the following:


(1) $d_\tau(e)-d_\tau(e_0)<\ep$ for all $\tau\in \Qw$ and 

(2) there exists an increasing sequence  $\{e_{0,n}\}$ in $\Her(e_0)_+^{\bf 1}$ 
such that $\wh{e_{0,n}}\nearrow \wh{[e_0]}$ and 
$\omega(e_{0,n})<\ep$ for all $n\in \N.$


\end{lem}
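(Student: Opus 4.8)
The strategy is to first reconcile the dimension function $\wh{[e]}$ with an honest (continuous, not just lower semicontinuous) affine function by cutting down slightly, and then to use surjectivity of $\Gamma$ together with Proposition \ref{Psincrease} to manufacture the approximating sequence out of a block-diagonal sum whose blocks have continuous rank functions and hence oscillation zero. First I would apply Lemma \ref{Lcontsc} inside $\Her(e)$ to produce a nonzero $c\in \Her(e)_+^{\bf 1}$ with $n[c]\le[e]$ for $n$ chosen so that $\sup_{\tau\in\Qw}d_\tau(e)/n<\ep$; passing to $c':=f_{1/4}(c)$ and $e_0:=(1-f_{1/8}(c))^{1/2}e(1-f_{1/8}(c))^{1/2}$ gives $e_0\perp c'$ with $e\lesssim e_0\oplus c$, so $d_\tau(e)-d_\tau(e_0)\le d_\tau(c)<\ep$ for all $\tau\in\Qw$, which is (1). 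Moreover $e_0\in {\rm Her}(e)_+^{\bf 1}$ and $e_0\in{\rm Ped}(A\otimes{\cal K})_+$, so $g:=\wh{[e_0]}=d_\tau(e_0)$ lies in $\Aff_+(\Qw)$ — that is, $g$ is a strictly positive \emph{continuous} affine function on the compact convex set $\Qw$. (Here continuity is the point of cutting down: it is what lets the rest of the argument go through.)

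Next, since $\Gamma$ is surjective and $g$ is continuous, Proposition \ref{Psincrease} (applied with the trivial approximating sequence $h_n=g$, or with any increasing continuous approximants) writes $g=\sum_{n=1}^\infty g_n$ with each $g_n\in\Aff_+(\Qw)\setminus\{0\}$ strictly positive continuous affine, the series converging pointwise. By surjectivity of $\Gamma$ pick $a_n\in(A\otimes{\cal K})_+$ with $d_\tau(a_n)=g_n(\tau)$ for all $\tau$; since each $g_n$ is continuous on $\Qw$ and $d_\tau(a_n)\le \tau(f_{1/m}(a_n))$-type estimates hold, one arranges (replacing $a_n$ by a suitable $f_\delta(a_n)$ and using that $g_n$ is continuous) that $\omega(a_n)=0$, i.e.\ $d_\tau(a_n)$ is continuous on $\Qw$. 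Arrange the $a_n$ to be mutually orthogonal (rescale and place in orthogonal corners of $\Her(e_0)\otimes{\cal K}\cong A\otimes{\cal K}$) and set $b:=\sum_{n=1}^\infty a_n/2^n\in(A\otimes{\cal K})_+^{\bf 1}$, so $\wh{[b]}=\sum g_n=g=\wh{[e_0]}$. By strict comparison $[b]=[e_0]$ in $\Cu(A)$; this identifies $\Her(b)$ with a hereditary subalgebra stably isomorphic to (indeed, by Cuntz equivalence, having the same dimension function as) $\Her(e_0)$.

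To produce the sequence $\{e_{0,n}\}$: for each $n$ and each $j\le n$, since $\wh{[a_j]}=g_j$ is continuous ($\omega(a_j)=0$), choose $e_{j,n}\in\Her(a_j)_+^{\bf 1}$ forming an approximate identity for $\Her(a_j)$ with $\omega(e_{j,n})<\ep/2^{j}$ and $d_\tau(a_j)-\tau(e_{j,n})<\ep/2^{j}$ for all $\tau\in\Qw$ (possible by Lemma \ref{Lcomapctcontain} and the definition of $\omega$). Put $E_n:=\sum_{j=1}^n e_{j,n}\in\Her(b)_+^{\bf 1}$ (the summands are orthogonal so the sum has norm $\le 1$); then $\{E_n\}$ increases, forms an approximate identity for $\Her(b)$, and by subadditivity of $\omega$ on orthogonal sums (\cite[Proposition 4.2(2)]{FLosc} and \cite{FLosc}) $\omega(E_n)\le\sum_{j=1}^n\omega(e_{j,n})<\ep$. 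Since $E_n\nearrow$ a strictly positive element of $\Her(b)$ we get $\wh{E_n}\nearrow\wh{[b]}=\wh{[e_0]}$. Finally transport $\{E_n\}$ back into $\Her(e_0)$: because $[b]=[e_0]$, by strict comparison $E_n\lesssim e_0$, and a standard Cuntz-subequivalence argument (using \cite[Lemma 2.2]{Rr1} or \cite[Lemma 2.4]{Rr1}) yields $e_{0,n}\in\Her(e_0)_+^{\bf 1}$ with $\|e_{0,n}-($ image of $E_n)\|$ small, hence $\omega(e_{0,n})<\ep$ and $\wh{e_{0,n}}\nearrow\wh{[e_0]}$, which is (2).

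\textbf{Main obstacle.} The delicate point is the last step: $\omega$ is not continuous under perturbation or under mere Cuntz subequivalence, so moving the approximate identity $\{E_n\}$ from $\Her(b)$ back into $\Her(e_0)$ while preserving both $\omega(e_{0,n})<\ep$ \emph{and} the monotone convergence $\wh{e_{0,n}}\nearrow\wh{[e_0]}$ requires care — one likely wants to build $b$ directly \emph{inside} $\Her(e_0)$ (i.e.\ realize $\sum a_n/2^n$ as a strictly positive element of $\Her(e_0)$ itself, using surjectivity of $\Gamma$ for $\Her(e_0)$ rather than for $A$), so that $\Her(b)=\Her(e_0)$ on the nose and no transport is needed. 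The rest is bookkeeping with $\omega$'s behaviour on orthogonal sums and the elementary estimates from Lemma \ref{Lcomapctcontain}.
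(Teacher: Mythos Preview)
Your instinct at the end is exactly right: the transport step is where the argument breaks, and the fix you propose --- build the block-diagonal element directly inside the target hereditary subalgebra --- is precisely what the paper does, but carrying it out requires a tool you have not invoked. First a side remark: cutting down by $c$ does \emph{not} make $\wh{[e_0]}$ continuous; your $e_0=(1-f_{1/8}(c))^{1/2}e(1-f_{1/8}(c))^{1/2}$ is just another Pedersen-ideal element and $d_\tau(e_0)$ is in general only lower semicontinuous. This is not fatal, since Proposition~\ref{Psincrease} only needs an increasing sequence of continuous approximants, which any element of $\LAff_+(\Qw)$ has by definition. The real gap is the one you flag. Cuntz equivalence $[b]=[e_0]$ does not give an isomorphism $\Her(b)\cong\Her(e_0)$ without stable rank one, and stable rank one is the goal of the paper, so it cannot be assumed here. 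Moving the $E_n$ via \cite[Lemma~2.2]{Rr1} destroys both the monotonicity $e_{0,n}\le e_{0,n+1}$ and any control on $\omega(e_{0,n})$; and ``surjectivity of $\Gamma$ for $\Her(e_0)$'' only hands you elements somewhere in $(A\otimes{\cal K})_+$ with prescribed dimension function, not mutually orthogonal pieces inside $\Her(e_0)$ summing to a strictly positive element.

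The paper avoids transport altogether by never fixing $e_0$ in advance. It applies Proposition~\ref{Psincrease} directly to $\wh{[e]}$ to write $d_\tau(e)=\sum_m h_m(\tau)$ with each $h_m$ continuous, and then \emph{recursively} builds mutually orthogonal $b_m\in\Her(e)_+^{\bf 1}$ with $\tau(b_m)\approx h_m(\tau)$ and $\omega(b_m)<\ep/2^{m+1}$. The recursion step is Lemma~\ref{Lorth}, which in turn rests on Theorem~\ref{Tcomplessapprox}: since $h_m$ is continuous, the abstract element $a_m$ with $\wh{[a_m]}=h_m$ satisfies $\omega(a_m)=0$, so Theorem~\ref{Tcomplessapprox} gives an honest embedding $a_m\simle c_{m-1}$ (i.e.\ $x^*x=a_m$, $xx^*\in\Her(c_{m-1})$, not mere Cuntz subequivalence) into the orthogonal leftover $c_{m-1}\subset\Her(e)$, and Lemma~\ref{Lorth} produces both $b_m$ and the next leftover $c_m\in\Her(b_m)^\perp\cap\Her(c_{m-1})$ with controlled dimension. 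One then sets $e_0:=\sum_n b_n/n$ and $e_{0,n}:=\sum_{i=1}^n b_i^{1/n}$; by construction these sit inside $\Her(e)$ and $\Her(e_0)$ respectively, the sequence is increasing by orthogonality, and (1)--(2) follow by direct estimate. The missing ingredient in your plan is this use of Theorem~\ref{Tcomplessapprox} (through Lemma~\ref{Lorth}) to place each orthogonal piece inside $\Her(e)$ from the start rather than assembling them elsewhere and trying to move the result.
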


\begin{proof}
By Lemma \ref{Psincrease},  choose  $h_n\in \Aff_+(\Qw),$ $n\in \N$ such that $h_n(\tau)>0$ for all $\tau\in \Qw$ and 
\beq\label{dig-e-1}
\sum_{n=1}^\infty h_n(\tau)=d_\tau(e)\rforal \tau\in \Qw
\eneq
(the infinite series converges pointwisely).

Since $\Gamma$ is surjective, one may choose  mutually orthogonal 
elements $a_n\in (A\otimes {\cal K})_+^{\bf 1}$ such that
$\wh{[a_n]}=h_n,$ $n\in \Qw.$ Note that $\omega(a_n)=0.$
By Corollary \ref{CPedsen}, $a_n\in {\rm Ped}(A\otimes {\cal K})_+^{\bf 1}.$

Note that $h_1(\tau)<d_\tau(e)$ for all $\tau\in \Qw.$ Let $\ep>0.$
By Lemma \ref{Lorth}, 
there exists $b_1\in \Her(e)_+^{\bf 1}$ such that

(i) $h_1(\tau)-\ep
<\tau(b_1)\le h_1(\tau)$
for all $\tau\in \Qw$ and 

(ii) $\omega(b_1)<\ep/4,$ 

(iii) $d_\tau(e)-d_\tau(a_1)\le d_\tau(c_1)< (d_\tau(e)-d_\tau(a_1)) +\ep/8$
for all $\tau\in \Qw,$
where $c_1$ is a positive element of $\Her(b_1)^\perp\cap \Her(e).$
We may assume that $\|c_1\|=1.$

We claim that, 
there are mutually orthogonal 
elements $\{b_n\}$ in $\Her(e)_+^{\bf 1}$ such that, for each $m\in \N,$

(i')  $h_m(\tau)-\ep/2^{m+2}
<\tau(b_m)\le h_m(\tau)$
for all $\tau\in \Qw,$ 

(ii') $\omega(b_m)<\ep/2^{m+1},$   and

(iii')  for all $\tau\in \Qw,$
 \beq
\hspace{-0.2in}d_\tau(e)-\sum_{i=1}^m d_\tau(a_i) &\le& d_\tau(c_m)
\\
&<&
(d_\tau(e)-\sum_{i=1}^md_\tau(a_i)) +\sum_{i=1}^m\ep/2^{i+2},
\eneq
where $c_m$ is a positive element of $(\Her(\sum_{i=1}^m b_i))^\perp\cap \Her(e)$
with $\|c_m\|=1.$

Let us assume that the claim holds for $j=1,2,...,m.$
By \eqref{dig-e-1} and   (iii'), we have 
\beq
d_\tau(a_{m+1})=h_{m+1}(\tau)<d_\tau(c_m)\rforal \tau\in \Qw.
\eneq
By Lemma \ref{Lorth},  there is $b_{m+1}\in \Her(c_m)_+^{\bf 1}$ such that

(a) $h_{m+1}(\tau)-\ep/2^{m+3}
<\tau(b_{m+1})\le h_{m+1}(\tau)$
for all $\tau\in \Qw$ and 

(b) $\omega(b_{m+1})<\ep/2^{m+2}$

\noindent 
(so (i') and (ii') holds for $m+1$),

(c) for any $\tau\in \Qw,$ \beq
d_\tau(c_m)-d_\tau(a_{m+1}) &\le& d_\tau(c_{m+1})\\
&<& (d_\tau(c_m)-d_\tau(a_{m+1})) +\ep/2^{m+3},
\eneq
where $c_{m+1}$ is a 
positive element of  $\Her(b_{m+1})^\perp\cap \Her(c_m).$
Since $c_m\in (\Her(\sum_{i=1}^mb_i))^\perp,$ 
$c_{m+1}\in  (\Her(\sum_{i=1}^{m+1} b_i))^\perp.$ We may assume that $\|c_{m+1}\|=1.$

By (iii') (for $m$) and (c), we estimate that, for all $\tau\in \Qw,$ 
\beq
d_\tau(e)-\sum_{i=1}^{m+1}d_\tau(a_i)&=&(d_\tau(e)-\sum_{i=1}^md_\tau(a_i))-d_\tau(a_{m+1})\\
&\le & d_\tau(c_m)-d_\tau(a_{m+1}) \le d_\tau(c_{m+1})\\
&<& d_\tau(c_m)-d_\tau(a_{m+1}) +\ep/2^{m+3}\\
&<& (d_\tau(e)-\sum_{i=1}^m d_\tau(a_i) +\sum_{i=1}^m \ep/2^{i+2})-d_\tau(a_{m+1})+\ep/2^{m+3}\\
&=& d_\tau(e)-\sum_{i=1}^{m+1} d_\tau(a_i))+\sum_{i=1}^{m+1}\ep/2^{i+2}.
\eneq
By the induction on $m,$  the claim follows.


Define $e_0=\sum_{n=1}^\infty b_n/n.$ Since $b_ib_j=b_j b_i=0,$ if $i\not=j,$ we have
$0\le e_0\le 1$ and $e_0\in \Her(e)_+.$ 
We compute, by (i') and \eqref{dig-e-1},  that
\beq
d_\tau(e_0)=\sum_{n=1}^\infty d_\tau(b_n)\ge \sum_{n=1}^\infty h_n(\tau)-\sum_{n=1}^\infty \ep/2^{m+1}
> d_\tau(e)-\ep\rforal \tau\in \Qw.
\eneq
Thus (1) holds.   To see (2)  holds,  
we first note that, for all $n\in \N,$ since $b_ib_j=b_jb_i=0,$ if $i\not=j,$ and $\|b_n\|\le 1,$ 
and, by (ii')  and Proposition 4.4 of \cite{FLosc}, 
\beq
\omega(\sum_{i=1}^n b_i)\le \sum_{i=1}^n \omega(b_i)<\ep/2\andeqn
\|\sum_{i=1}^n b_i\|\le 1.
\eneq
Define $e_{0,n}=\sum_{i=1}^n b_i^{1/n}=(\sum_{i=1}^n b_i)^{1/n}.$ Then $e_{0, n}\le e_{0,n+1}$ and $e_{0, n}\in \Her(e)_+^{\bf 1},$
$n\in \N.$ 
Moreover,  for any $m\in \N,$  if $n\ge m,$
\beq
\tau(e_{0,n})=\tau((\sum_{i=1}^n b_i)^{1/n})\ge \tau((\sum_{i=1}^m b_i)^{1/n})\rforal \tau\in \Qw.
\eneq
It follows that
\beq
\lim_{n\to\infty}\tau(e_{0,n})\ge d_\tau(\sum_{i=1}^m b_i) \rforal \tau\in \Qw.
\eneq
Let  $m\to\infty.$ We obtain 
\beq
\lim_{n\to\infty}\tau(e_{0,n})\ge d_\tau(e_0)\rforal \tau\in \Qw.
\eneq
As $e_{0,n}\in \Her(e_0)_+$ and $e_{0, n}\le e_{0,n+1}$ for all $n\in \N,$
we conclude that 
\beq
\wh{e_{0,n}}\nearrow  \wh{[e_0]}.
\eneq 
Since $e_{0,n}\sim \sum_{i=1}^n b_i,$  we conclude that  
\beq 
\omega(e_{0,n})=\omega(\sum_{i=1}^n b_i)\le \sum_{i=1}^n \omega(b_i)<\ep/2\rforal n\in \N.
\eneq
Hence (2) holds.
%
%
%
%
%
\end{proof}

\begin{thm}\label{TT}
Let $A$ be a $\sigma$-unital simple \CA\, with strict comparison and with 
surjective 
canonical map $\Gamma$ (which is not purely infinite).
Then $A$ has tracial approximate oscillation zero.
\end{thm}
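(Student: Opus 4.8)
The plan is to reduce to the algebraically simple case and then feed the output of Lemma \ref{Ldig} into the characterization of tracial approximate oscillation zero. First I would fix a nonzero $e\in{\rm Ped}(A)_+$, pass to $C={\rm Her}(e)$, and use Brown's stable isomorphism theorem together with the fact that $\omega$, $\Omega^T$ and surjectivity of $\Gamma$ are all insensitive to this replacement (as recorded in Definitions \ref{Domega} and \ref{DOT}, and in the remarks after Definition \ref{Dgamma}); so we may assume $A$ is algebraically simple with $QT(A)\neq\emptyset$, and we work with the compact convex set $\Qw$. By Definition \ref{DOT} it suffices to show $\Omega^T(a)=0$ for every $a\in{\rm Ped}(A\otimes{\cal K})_+^{\bf 1}$, i.e.\ to produce, for such $a$, a sequence $b_n\in{\rm Ped}({\rm Her}(a))_+^{\bf 1}$ with $\omega(b_n)\to 0$ and $\|a-b_n\|_{_{2,\Qw}}\to 0$.

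The key step is the following: given $a\in{\rm Ped}(A\otimes{\cal K})_+^{\bf 1}$ and $\ep>0$, apply Lemma \ref{Ldig} with $e=a$ to obtain $e_0\in{\rm Her}(a)_+^{\bf 1}$ with $d_\tau(a)-d_\tau(e_0)<\ep'$ on $\Qw$ (for a suitably small $\ep'$ depending on $\ep$), together with an increasing sequence $e_{0,n}\in{\rm Her}(e_0)_+^{\bf 1}$ satisfying $\wh{e_{0,n}}\nearrow\wh{[e_0]}$ and $\omega(e_{0,n})<\ep$ for all $n$. From $\wh{e_{0,n}}\nearrow\wh{[e_0]}$ and Dini-type reasoning (using that $\wh{[e_0]}$ is the supremum of the continuous functions $\wh{e_{0,n}}$ on the compact set $\Qw$, and that for $m$ large $\tau(f_{1/m}(e_0))$ is close to $d_\tau(e_0)$), I would choose $n=n(\ep)$ so that $\tau(e_0-e_{0,n})=\tau(e_0)-\tau(e_{0,n})$ is uniformly small on $\Qw$; combined with $d_\tau(a)-d_\tau(e_0)<\ep'$ and $\tau(a-e_0)\le d_\tau(a)-\tau(e_0)$ this makes $\|a-e_{0,n}\|_{_{2,\Qw}}$ small. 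Setting $b_n:=e_{0,n}$ for this choice (iterating over $\ep=1/n$) yields the required sequence: $b_n\in{\rm Ped}({\rm Her}(a))_+^{\bf 1}$ since ${\rm Her}(a)\subset{\rm Ped}(A\otimes{\cal K})$, $\omega(b_n)\to 0$, and $\|a-b_n\|_{_{2,\Qw}}\to 0$. Hence $\Omega^T(a)=0$.

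The main obstacle I anticipate is the $\|\cdot\|_{_{2,\Qw}}$-estimate: Lemma \ref{Ldig} controls the \emph{dimension functions} $d_\tau$ and $\tau$-values, but tracial approximate oscillation zero is phrased in the $L^2$-norm $\|a-b_n\|_{_{2,\Qw}}=\sup_\tau\tau((a-b_n)^*(a-b_n))^{1/2}$. The point is that $e_{0,n}\le e_0$ and, after passing to $f_\dt(e_0)$ for small $\dt$, one has $e_0\lesssim a$ with $a-e_0$ (up to a cutdown) supported where $a$ is small, so $\tau((a-e_{0,n})^2)\le \|a\|\cdot\tau(a-e_{0,n})$ is governed by $d_\tau(a)-\tau(e_{0,n})$, which the lemma makes small. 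Carefully arranging the cutoffs so that this chain of inequalities holds uniformly over $\tau\in\Qw$ — and bookkeeping the $\ep$'s between $\omega(b_n)<\ep$ and the $L^2$-estimate — is the technical heart; everything else is assembling Lemmas \ref{Ldig}, \ref{Lcomapctcontain} and the definitions. Finally I would invoke \cite[Proposition 4.8]{FLosc} (or Definition \ref{DOT} directly) to conclude $A$ has tracial approximate oscillation zero, and note the reduction step shows this is independent of the choice of $e$.
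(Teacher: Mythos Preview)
Your overall architecture is right and matches the paper: reduce to the algebraically simple case, apply Lemma \ref{Ldig} with $e=a$, and feed the resulting $e_{0,n}$ into the $\|\cdot\|_{_{2,\Qw}}$-criterion. The difficulty you correctly flag as the ``main obstacle'' is, however, not resolved by your sketch, and your choice $b_n:=e_{0,n}$ does not work as stated.

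The inequality $\tau((a-e_{0,n})^2)\le \|a\|\,\tau(a-e_{0,n})$ that you invoke needs $a-e_{0,n}\ge 0$. But Lemma \ref{Ldig} only gives $e_{0,n}\in\Her(e_0)_+^{\bf 1}\subset\Her(a)_+^{\bf 1}$; it does \emph{not} give $e_{0,n}\le a$ (nor even $e_{0,n}\le e_0$: in the proof of \ref{Ldig} one has $e_0=\sum b_i/i$ while $e_{0,n}=\sum_{i\le n}b_i^{1/n}$). Your remark that ``$e_0\lesssim a$ with $a-e_0$ (up to a cutdown) supported where $a$ is small'' mixes Cuntz subequivalence with order and does not produce the needed positivity. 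Without $a-e_{0,n}\ge 0$ you cannot even guarantee $\tau(a-e_{0,n})\ge 0$, let alone control $\tau((a-e_{0,n})^2)$ by $d_\tau(a)-\tau(e_{0,n})$.

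The missing idea, which the paper supplies, is to conjugate: set $b:=a^{1/2}e_{0,n_0}a^{1/2}$. Then automatically $0\le b\le a\le 1$, so $\tau((a-b)^2)\le \tau(a-b)=\tau(a)-\tau(a^{1/2}e_{0,n_0}a^{1/2})$. One then compares $\tau(a^{1/2}e_{0,n}a^{1/2})$ to $\tau(a^{1/2}qa^{1/2})$ (with $q$ the open projection of $\Her(e_0)$), uses $\tau(a^{1/2}(p-q)a^{1/2})\le\|a\|\tau(p-q)<\eta$ from part (1) of Lemma \ref{Ldig}, and applies Lemma \ref{Lcomapctcontain} to choose $n_0$. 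Finally, since $e_{0,n}\in\Her(a)_+$, one has $a^{1/2}e_{0,n}a^{1/2}\sim e_{0,n}$ in $\Cu(A)$ (this is \cite[Lemma 4.10]{FLosc}), so $\omega(b)=\omega(e_{0,n_0})<\ep$. Once you insert this conjugation step your outline becomes the paper's proof.
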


\begin{proof}
Choose $e\in {\rm Ped}(A)_+^{\bf 1}\setminus \{0\}.$ Then $\Her(e)$ is algebraically simple.
By Brown's stable isomorphism theorem (\cite{B1}), $A\otimes {\cal K}\cong \Her(e)\otimes {\cal K}.$
Therefore, \wilog, we may assume that $A$ is algebraically simple.

Let $a\in {\rm Ped}(A\otimes {\cal K})_+^{\bf 1}.$ We want to show $\Omega^T(a)=0.$
\Wlog, we may assume that $\|a\|=1.$
It suffices to show that, for any $1>\ep>0,$ there exists $b\in \Her(a)_+^{\bf 1}$
with $\omega(b)<\ep$ and 
\beq
\|a-b\|_{_{2, \Qw}}<\ep.
\eneq

Let  (recall that $0\not\in \Qw,$ see Lemma 4.5 of \cite{eglnp}
and Proposition 2.9 of \cite{FLosc})
\beq
\lambda_a=\inf\{\tau(a): \tau\in \Qw\}>0\andeqn M_a=\sup\{\tau(a):\tau\in \Qw\}<\infty.
\eneq
Set 
\beq
\ep_1=\ep^2/(M_a+1)\andeqn \eta=\ep^2\lambda_a/2(M_a+1)=\ep_1\lambda_a/2.
\eneq
Note that $0<\eta<\ep/2.$

By Lemma \ref{Ldig},  there is $e_0\in {\rm Her}(a)_+^{\bf 1}$ 
satisfying the following:


(1) $d_\tau(a)-d_\tau(e_0)<\eta$ for all $\tau\in \Qw$ and 

(2) there exists an increasing sequence  $\{e_{0,n}\}$ in $\Her(e_0)_+^{\bf 1}$ 
such that $\wh{e_{0,n}}\nearrow \wh{[e_0]}$ and 
$\omega(e_{0,n})<\eta$ for all $n\in \N.$

Denote by $p$ and $q$ the open projections 
associated with $\Her(a)$ and $\Her(e_0),$ respectively.
Hence (1) is the same as $\tau(p-q)<\eta$ for all $\tau\in \Qw.$

Note that, for all $\tau\in \Qw,$ 
\beq
\tau(a^{1/2}(p-q)a^{1/2})&\le&\|a\|\tau(p-q)<\eta<\ep^2\lambda_a/(M_a+1)\\
&\le &\ep_1 \tau(a)=\tau(a^{1/2}pa^{1/2})-\tau(a^{1/2}(p-\ep_1 p)a^{1/2}).
\eneq
It follows that (recall that $pa=a$) 
\beq
\tau(a^{1/2}qa^{1/2})>\tau(a^{1/2}(p-\ep_1 p)a^{1/2})=(1-\ep_1)\tau(a)\rforal \tau\in \Qw.
\eneq
Note that $(1-\ep_1^2)\wh{a},\, \wh{a^{1/2}e_{0,n}a^{/12}}\in \Aff_+(\Qw)$ and 
$\wh{a^{1/2} e_{0,n} a^{1/2}}\nearrow \wh{a^{1/2}qa^{1/2}}$ point-wisely. It follows 
(see Lemma \ref{Lcomapctcontain}) that, there exists $n_0$ such that, for all $n\ge n_0,$
\beq\label{TT-10}
\tau(a^{1/2} e_{0,n} a^{1/2})>(1-\ep_1)\tau(a)\rforal \tau\in \Qw.
\eneq
Put $b=a^{1/2}e_{0, n_0}a^{1/2}\in \Her(a)_+^{\bf 1}.$ 
Note that $0\le b\le a$ and $0\le a-b\le 1.$ 
Then, by \eqref{TT-10}, 
\beq
\|a-b\|_{_{2, \Qw}}
&=&\sup\{\tau((a-b)^2)^{1/2}: \tau\in \Qw\}\\
&\le &\sup\{\tau(a-b)^{1/2}: \tau\in \Qw\}\\
&=&\sup\{(\tau(a)-\tau(a^{1/2}e_{0,n_0}a^{1/2}))^{1/2}: \tau\in \Qw\}\\
&<&\sup\{\sqrt{\ep_1\cdot \tau(a)}:\tau\in \Qw\}\\
&=& {\ep\over{\sqrt{M_a+1}}}\cdot \sqrt{\sup\{\tau(a): \tau\in \Qw\}}<\ep.
\eneq
Since $e_{0,n}\in \Her(a)_+,$ $a^{1/2}e_{0, n}a^{1/2}\sim e_{0,n}$ (see Lemma 4.10 of \cite{FLosc}).
It follows that 
\beq
\omega(b)<\ep/2.
\eneq
%
%
Hence $\Omega^T(a)=0.$
\end{proof}

{\bf The proof of Theorem \ref{MT}}:

Recall that,  since $A$ is not purely infinite, by strict comparison, we also assume that $\wtd{QT}(A)\setminus \{0\}\not=\emptyset.$
  By Theorem \ref{TT}, (1)  $\Rightarrow$ (2). 
  That (2) $\Rightarrow$ (1) follows from Theorem 7.11 of \cite{FLosc}.
  
  Suppose (1), or (2) hold. Then, by what has  been proved, both (1) and (2) hold.
  Then, by Theorem 9.4 of \cite{FLosc}, $A$ has stable rank one. 
 This  proves Theorem \ref{MT}.
 
 \vspace{0.2in}

 {\bf The proof of Corollary \ref{CM}}:
 
 Corollary \ref{CM} follows from the combination of Theorem 1.1 of \cite{FLosc} and 
 Theorem \ref{TT}. 
 
 \vspace{0.2in}
 
 By Proposition \ref{CM2}, Corollary \ref{CM2}  follows from the following (see Definition \ref{DpureW}):
 
 \begin{cor}\label{Cpure}
 Let $A$ be a separable  simple pure \CA\, which is not purely infinite. Then 
 $A$ has stable rank one. 
 \end{cor}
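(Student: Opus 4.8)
The plan is to derive Corollary \ref{Cpure} directly from the machinery already assembled. Suppose $A$ is a separable simple pure \CA\ which is not purely infinite. By Proposition \ref{PCuntz}, purity (condition (4)) is equivalent to the statement that $A$ has strict comparison and that the canonical map $\Gamma$ is surjective (condition (1)). Thus $A$ is a separable (hence $\sigma$-unital) non-elementary simple \CA\ with strict comparison and surjective $\Gamma$, and it is not purely infinite. These are precisely the hypotheses of Theorem \ref{MT}: by that theorem, since condition (1) holds, $A$ has stable rank one (and also tracial approximate oscillation zero). So the corollary is immediate.

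First I would spell out the non-elementary reduction: a pure simple \CA\ that is not purely infinite cannot be elementary (an elementary \CA\ is of the form $\mathcal{K}(H)$, and one should check this is incompatible with being a non-purely-infinite pure algebra; alternatively, Theorem \ref{MT} is stated for non-elementary $A$, so one simply observes that the finite-dimensional and $\mathcal{K}$ cases are trivial or excluded). Then I would invoke Proposition \ref{PCuntz}, (4) $\Rightarrow$ (1), to upgrade "pure" to "strict comparison plus surjective $\Gamma$". Finally I would apply the last sentence of Theorem \ref{MT}: if (1) holds then $A$ has stable rank one.

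The main obstacle — if there is one at all — is purely bookkeeping: making sure the hypotheses of Theorem \ref{MT} are met verbatim, in particular that $A$ being not purely infinite forces $\wtd{QT}(A)\setminus\{0\}\neq\emptyset$ (this is noted in the proof of Theorem \ref{MT} and follows from the discussion after Definition \ref{DpureW}), and that separability gives $\sigma$-unitality. There is no real mathematical content beyond citing Proposition \ref{PCuntz} and Theorem \ref{MT}; the corollary is essentially a repackaging of Theorem \ref{MT} using the equivalence of "pure" with "strict comparison $+$ surjective $\Gamma$". Hence the proof is a two-line deduction:
\begin{proof}
Since $A$ is pure, by Proposition \ref{PCuntz} it has strict comparison and the canonical map $\Gamma$ is surjective. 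As $A$ is not purely infinite, $\wtd{QT}(A)\setminus\{0\}\neq\emptyset$, and being separable simple and pure it is non-elementary. Thus Theorem \ref{MT} applies and $A$ has stable rank one.
\end{proof}
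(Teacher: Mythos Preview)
Your proposal is correct and matches the paper's own proof almost verbatim: the paper also invokes Proposition \ref{PCuntz} to pass from ``pure'' to ``strict comparison $+$ surjective $\Gamma$'' and then applies Corollary \ref{CM} (equivalently, Theorem \ref{MT}) to conclude stable rank one. The only cosmetic difference is that the paper cites \cite[Corollary 5.1]{Ror04JS} explicitly for $\wtd{QT}(A)\neq\{0\}$, whereas you point to the discussion after Definition \ref{DpureW}; your observation that elementary \CA s fail almost divisibility (so purity forces non-elementary) is a nice extra check the paper leaves implicit.
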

 
 \begin{proof}
 Since $A$ is not purely infinite, by \cite[Corollary 5.1]{Ror04JS} (see also \cite[Proposition 4.9]{FLsigma}),
 $\wtd{QT}(A)\not=\{0\}.$ 
 By Theorem  \ref{PCuntz},
 $A$ has strict comparison and   $\Gamma$ is surjective.  Thus Corollary \ref{CM2} follows 
 from Corollary \ref{CM}. 
 \end{proof}
 
 Return to the question {\bf Q}, we have the following corollary.
 Note that, by, for example, \cite[Proposition 2.1.2]{Rl}, 
 when $A$ has stable rank one, $a\lesssim  b$ implies that $a\simle b.$

 \begin{cor}\label{Ccomparison}
 Let $A$ be a $\sigma$-unital simple \CA\, with $\wtd{QT}(A)\not=\{0\}.$
 Suppose that $A$ has strict comparison and $\Gamma$ is surjective.
 If $a, b\in (A\otimes {\cal K})_+$ with 
 $d_\tau(a)<d_\tau(b)$ for all $\tau\in \wtd{QT}(A)\setminus \{0\},$ 
 then 
 $a\simle b.$
 \end{cor}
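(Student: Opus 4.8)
The plan is to reduce the assertion to the stable rank one conclusion of Theorem~\ref{MT} combined with strict comparison, and then to promote Cuntz subequivalence to the relation $\simle$.

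First I would verify that the hypotheses of Theorem~\ref{MT} are in force. The assumption $\wtd{QT}(A)\neq\{0\}$ forces $A$ to be not purely infinite, since a simple purely infinite \CA\ has only the zero quasitrace. Also, surjectivity of $\Gamma$ forces $A$ to be non-elementary: for an elementary \CA\ one has $\Cu(A)\cong\N\cup\{\infty\}$, whose image under the canonical map is countable and hence cannot exhaust the (uncountable) cone ${\rm LAff}_+(\wtd{QT}(A))$. Thus $A$ is a $\sigma$-unital non-elementary simple \CA\ with strict comparison and surjective $\Gamma$ which is not purely infinite, so Theorem~\ref{MT} applies and $A$ has stable rank one.

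Next I would invoke strict comparison directly: the hypothesis $d_\tau(a)<d_\tau(b)$ for all $\tau\in\wtd{QT}(A)\setminus\{0\}$ gives $a\lesssim b$ in $\Cu(A)$ by the very definition of strict comparison (Definition~\ref{Dqtr}). Note that, unlike in Theorem~\ref{Tcomplessapprox}, no finiteness $d_\tau(b)<\infty$ and no oscillation hypothesis $\omega(a)=0$ are needed here; this added generality is exactly what stable rank one provides. Finally, since $A$ has stable rank one, Cuntz subequivalence upgrades to $\simle$: by \cite[Proposition 2.1.2]{Rl} (recalled in the paragraph preceding the statement), $a\lesssim b$ implies that there exists $x\in A\otimes{\cal K}$ with $x^*x=a$ and $xx^*\in\Her(b)$, that is, $a\simle b$.

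The point I would stress is that essentially all of the difficulty lies upstream: the substance is Theorem~\ref{TT} (tracial approximate oscillation zero, itself resting on Lemmas~\ref{Lorth} and \ref{Ldig} and Theorem~\ref{Tcomplessapprox}), which together with \cite[Theorem 9.4]{FLosc} yields stable rank one. Granting that, the corollary is a two-step deduction and I anticipate no serious obstacle beyond the routine bookkeeping of confirming that the purely infinite and elementary cases are excluded by the hypotheses.
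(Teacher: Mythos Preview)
Your proposal is correct and follows essentially the same route as the paper: the remark immediately preceding the corollary records that, by \cite[Proposition 2.1.2]{Rl}, stable rank one upgrades $a\lesssim b$ to $a\simle b$, and the corollary is then an immediate consequence of strict comparison together with the stable rank one conclusion of Theorem~\ref{MT}. Your additional bookkeeping (excluding the purely infinite and elementary cases so that Theorem~\ref{MT} applies) is appropriate and the paper leaves it implicit.
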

 
 Let us return to \ref{NN2} and give a definite answer to the question there.
 Note that the converse of the next corollary also holds as ${\rm Ped}(A\otimes {\cal K})$ is a minimal 
 dense ideal (see \ref{NN2}).
 
 \begin{cor}\label{CPed2}
  Let $A$ be a $\sigma$-unital simple \CA\, with $\wtd{QT}(A)\not=\{0\}.$
 Suppose that $A$ has strict comparison and $\Gamma$ is surjective
 and $S\subset \wtd{QT}(A)\setminus \{0\}$ is a compact subset such that $\R_+\cdot S={\wtd{QT}}(A).$
 If $x\in (A\otimes {\cal K})$ such that $\sup\{d_\tau(x^*x): \tau\in S\}<\infty,$ 
 then $x\in {\rm Ped}(A\otimes {\cal K}).$
 \end{cor}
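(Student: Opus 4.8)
The plan is to imitate the proof of Corollary~\ref{CPedsen}, with the hypothesis $\omega(x^*x)=0$ there (which produced $x^*x\simle f_\dt(b)$ via Theorem~\ref{Tcomplessapprox}) replaced by the conclusion of Theorem~\ref{MT}. First note that $\wtd{QT}(A)\not=\{0\}$ together with strict comparison forces $A$ not to be purely infinite, so Theorem~\ref{MT} applies and gives that $A$ has stable rank one; hence, by \cite[Proposition 2.1.2]{Rl}, $a\lesssim b$ implies $a\simle b$ for all positive $a,b\in A\otimes{\cal K}$. (If $A$ is elementary the statement is immediate, since then ${\rm Ped}(A\otimes{\cal K})$ is the ideal of finite rank operators and finiteness of $d_\tau(x^*x)$ forces $x$ to have finite rank; so I would assume $A$ non-elementary.)

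Next I would put $a=x^*x$ and $M=\sup\{d_\tau(a):\tau\in S\}<\infty$, fix a nonzero $e\in A_+$, and take $b=\diag(e,e/2,e/3,\dots)\in (A\otimes{\cal K})_+$; since dimension functions are scale invariant and $A$ is simple, $d_\tau(b)=\infty$ for every $\tau\in\wtd{QT}(A)\setminus\{0\}$. The crux is to produce one $\dt>0$ with $d_\tau(a)<d_\tau(f_\dt(b))$ for all $\tau\in\wtd{QT}(A)\setminus\{0\}$. Here I would use that this inequality is invariant under $\tau\mapsto\lambda\tau$ and that $\R_+\cdot S=\wtd{QT}(A)$, so it is enough to check it on $S$, where $d_\tau(a)\le M$; and that $f_\dt(b)=\diag(f_\dt(e),f_\dt(e/2),\dots)$ is a finite direct sum with $d_\tau(f_\dt(e/n))$ equal to the value of $\tau$ on the spectral projection of $e$ over $(n\dt/2,\infty)$. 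Bounding the terms with $n\le\|e\|/\dt$ below by $\tau(p)$, where $p$ is the spectral projection of $e$ over $(\|e\|/2,\|e\|]$ (a nonzero positive element of ${\rm Ped}(A)$, since $p=p\,f_{\|e\|/4}(e)$ with $f_{\|e\|/4}(e)\in{\rm Ped}(A)$), one gets $d_\tau(f_\dt(b))\ge(\|e\|/\dt-1)\inf_{\sigma\in S}\sigma(p)$. The infimum is $>0$: $S$ is compact with $0\notin S$, $\sigma\mapsto\sigma(p)$ is lower semicontinuous so attains its minimum on $S$, and that minimum is positive since a nonzero quasitrace on a simple $C^*$-algebra is faithful on ${\rm Ped}$. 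Taking $\dt$ small enough that $(\|e\|/\dt-1)\inf_{\sigma\in S}\sigma(p)>M$ then gives the desired inequality.

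Finally, by strict comparison $a\lesssim f_\dt(b)$, and by stable rank one $a\simle f_\dt(b)$, so there is $y\in A\otimes{\cal K}$ with $y^*y=a=x^*x$ and $yy^*\in\Her(f_\dt(b))$. Since $f_\dt(b)$ is $f_\dt$ of an element of $(A\otimes{\cal K})_+$, every element of $\Her(f_\dt(b))$ has the form $f_{\dt/2}(b)z f_{\dt/2}(b)$ with $f_{\dt/2}(b)\in{\rm Ped}(A\otimes{\cal K})$, so $\Her(f_\dt(b))\subset{\rm Ped}(A\otimes{\cal K})$ and hence $yy^*\in{\rm Ped}(A\otimes{\cal K})_+$; arguing exactly as at the end of the proof of Corollary~\ref{CPedsen}, this yields $a=x^*x\in{\rm Ped}(A\otimes{\cal K})_+$ and therefore $x\in{\rm Ped}(A\otimes{\cal K})$. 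I expect the uniform choice of $\dt$ to be the only real obstacle: one must notice the scaling invariance that reduces the problem to the compact basis $S$ (on which the hypothesis bounds $d_\tau(a)$) and then bound $d_\tau(f_\dt(b))$ from below uniformly on $S$, using simplicity to keep the pertinent infimum off $0$; with that in hand the rest is parallel to Corollary~\ref{CPedsen}, now with stable rank one standing in for the condition $\omega(x^*x)=0$.
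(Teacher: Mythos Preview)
Your argument is correct and reaches the conclusion by a route close to, but distinct from, the paper's. Both proofs come down to exhibiting some $c\in{\rm Ped}(A\otimes{\cal K})_+$ with $d_\tau(a)<d_\tau(c)$ for all nonzero $\tau$ and then invoking stable rank one (supplied by Theorem~\ref{MT}) to upgrade $a\lesssim c$ to $a\simle c$. They differ in how $c$ is produced. You build $c=f_\dt(b)$ explicitly with $b=\diag(e,e/2,\dots)$, exactly as in the proof of Corollary~\ref{CPedsen}, and spend the effort on choosing $\dt$ uniformly via the compactness of $S$ and the scaling invariance $\R_+\cdot S=\wtd{QT}(A)$. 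The paper instead uses the surjectivity of $\Gamma$ a second time: it picks $b$ with $\wh{[b]}$ equal to the constant function $M$ on $S$ (so $\omega(b)=0$), then quotes Corollary~\ref{CPedsen} itself to place $b$ in ${\rm Ped}$, and finishes with Corollary~\ref{Ccomparison}. The paper's route is two lines; yours is more hands-on but has the feature that the hypothesis ``$\Gamma$ surjective'' enters only through Theorem~\ref{MT}.

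One small repair is needed. Your spectral projection $p$ of $e$ over $(\|e\|/2,\|e\|]$ lies in $A^{**}$ and need not belong to $A$, so the parenthetical ``$p\in{\rm Ped}(A)$'' and the subsequent appeal to faithfulness on ${\rm Ped}$ are not literally valid. Replace $p$ by the continuous cutoff $g:=f_{\|e\|/2}(e)\in{\rm Ped}(A)_+\setminus\{0\}$: for $n\le\|e\|/(2\dt)$ one has $g\in\Her(f_\dt(e/n))_+^{\bf 1}$ and hence $d_\tau(f_\dt(e/n))\ge\tau(g)$, and now $\tau\mapsto\tau(g)$ is genuinely continuous on the compact set $S$ with strictly positive infimum (simplicity plus $0\notin S$). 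The counting and the choice of $\dt$ then go through exactly as you wrote.
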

 
 \begin{proof}
 Put $a=x^*x.$
 Choose $M\in \R_+$ such that
 $M>\sup\{d_\tau(a): \tau\in S\}.$ 
 Define $f\in \Aff_+(S)$ by $f(s)=M$ for all $s\in S$ and $\wtd f\in \Aff_+(\wtd{QT}(A))$ 
 by $\wtd f(r\cdot s)=rf(s)$ for all $s\in S$ and $r\in \R_+.$ 
 Since $\Gamma$ is surjective, we obtain $b\in (A\otimes {\cal K})_+$ such 
 that $\wh{[b]}=\wtd f.$  Since $\wh{[b]}$ is continuous, $\omega(b)=0.$
 It follows from Corollary \ref{CPedsen} that $b\in {\rm Ped}(A\otimes {\cal K})_+.$ 
 
 Now since $d_\tau(a)<d_\tau(b)$ for all $\tau\in {\wtd{QT}}(A)\setminus \{0\},$ 
 by Corollary \ref{Ccomparison},  there exists $y\in A\otimes {\cal K}$ such that
 \beq
 y^*y=a\andeqn yy^*\in \Her(b)\subset {\rm Ped}(A\otimes {\cal K}).
\eneq
It follows that $a\in {\rm Ped}(A\otimes {\cal K})_+,$ whence $x\in  {\rm Ped}(A\otimes {\cal K}).$
 \end{proof}

In the connection to Toms-Winter's Conjecture, with Theorem \ref{MT}, we may restate 
the main result in \cite[Theorem 1.1]{LinOZ} as follows:

\begin{thm}{\rm{(cf. \cite[Theorem 1.1]{LinOZ})}}\label{TM-2}
Let $A$ be a non-elementary separable amenable  simple \CA\, with $\wtd T(A)\setminus \{0\}\not=\emptyset$
such that $\wtd T(A)$ has a
$\sigma$-compact countable-dimensional extremal boundary. 
Then  following are equivalent.

(1) ${\rm Cu}(A)$ is almost unperforated and almost divisible,

(2) $A$ has strict comparison and the canonical map $\Gamma$ is surjective,

(3) $A$ has strict comparison and T-tracial approximate oscillation zero,

(4) $A$ has strict comparison and stable rank one,

(5) $A\cong A\otimes {\cal Z},$
%
%
%
\end{thm}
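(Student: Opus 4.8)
The plan is to close a cycle of implications among (1)--(5), using the machinery already assembled in this paper to identify (1)--(4) with one another, and to import \cite[Theorem 1.1]{LinOZ} for the one genuinely new implication into (5). Since $A$ is amenable it is exact, so by \cite{Haagtrace} every $2$-quasitrace on $A$ is a trace and $\wtd{QT}(A)=\wtd T(A)$; in particular "strict comparison" here is exactly the notion used throughout, and all of Sections 3 and 4 applies verbatim. I would also note at the outset that, being simple and non-elementary, $A$ is infinite dimensional, so Thiel's theorem in its non-unital form is available.

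First I would record (1) $\Leftrightarrow$ (2): this is precisely Proposition \ref{PCuntz} applied to the separable simple algebra $A$ (with the description ${\rm Cu}(A)=V(A)\sqcup({\rm LAff}_+(\wtd{QT}(A))\setminus\{0\})$ serving as the intermediate statement). Next, (2) $\Leftrightarrow$ (3): under strict comparison, Theorem \ref{MT} gives that surjectivity of $\Gamma$ is equivalent to tracial approximate oscillation zero, and, as recalled in Definition \ref{DOT}, for $C^*$-algebras with strict comparison tracial approximate oscillation zero and $T$-tracial approximate oscillation zero coincide; hence (2) and (3) say the same thing. For (2) $\Rightarrow$ (4) and (3) $\Rightarrow$ (4) I would invoke the "moreover" clause of Theorem \ref{MT}, which says that strict comparison together with either hypothesis forces stable rank one. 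Conversely, for (4) $\Rightarrow$ (2) I would use the theorem of Thiel \cite{Th} in the non-unital form \cite[Theorem 7.14]{APRT}: a separable (non-elementary) simple \CA\ of stable rank one has $\Gamma$ surjective, and strict comparison is already part of (4). At this stage (1)--(4) are mutually equivalent.

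It remains to fold in (5). For (5) $\Rightarrow$ (2): a separable $\mathcal Z$-stable \CA\ has almost unperforated ${\rm Cu}(A)$, hence strict comparison, by \cite{Ror04JS}, and $\Gamma$ is surjective by \cite{ERS}; so (5) $\Rightarrow$ (2) with no need for the boundary hypothesis. The only direction that uses the full strength of the assumptions --- in particular the $\sigma$-compact, countable-dimensional extremal boundary of $\wtd T(A)$ --- is (3) $\Rightarrow$ (5), which is exactly \cite[Theorem 1.1]{LinOZ}: a non-elementary separable amenable simple \CA\ with $\wtd T(A)\setminus\{0\}\neq\emptyset$ whose trace cone has such a boundary, and which has strict comparison and $T$-tracial approximate oscillation zero, is $\mathcal Z$-stable. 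The whole point of the present restatement is that, thanks to Theorem \ref{MT}, the hypothesis "$T$-tracial approximate oscillation zero" in \cite{LinOZ} may now be replaced by the a priori different-looking and often more readily checked hypotheses "$\Gamma$ surjective", "stable rank one", or "${\rm Cu}(A)$ almost unperforated and almost divisible".

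The main obstacle is entirely contained in \cite[Theorem 1.1]{LinOZ} and is not re-proved here: deducing $\mathcal Z$-stability from tracial approximate oscillation zero and strict comparison requires producing enough order-zero maps out of matrix algebras into $A$ (equivalently, in the unital reformulation, verifying property (TM) / a uniform property $\Gamma$ type statement), and it is precisely here that the regularity of the extremal boundary is used. Within the present paper the only care needed is bookkeeping: one must check that "strict comparison", "$\wtd{QT}(A)$ versus $\wtd T(A)$", and the several oscillation notions are the ones for which Theorem \ref{MT}, Proposition \ref{PCuntz}, and \cite{LinOZ} are stated, which is handled by exactness of $A$ and by the reductions recorded in Definitions \ref{Domega}--\ref{DOT}.
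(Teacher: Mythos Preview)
Your argument is correct. The organization differs slightly from the paper's: the paper simply quotes \cite[Theorem 1.1]{LinOZ} for the full equivalence of (3), (4) and (5), then appends (2)$\Leftrightarrow$(3) via Corollary~\ref{CM} and (1)$\Leftrightarrow$(2) via Proposition~\ref{PCuntz}. You instead extract from \cite{LinOZ} only the hard implication (3)$\Rightarrow$(5), and supply (5)$\Rightarrow$(2) from \cite{Ror04JS} and \cite{ERS}, and (4)$\Rightarrow$(2) from \cite{Th}/\cite{APRT}, with (2)$\Rightarrow$(4) via Theorem~\ref{MT}. This has the minor advantage of making explicit that only one direction of \cite[Theorem 1.1]{LinOZ} is actually needed once Theorem~\ref{MT} is in hand; the paper's version is terser. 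Both are sound, and the bookkeeping you flag (exactness giving $\wtd{QT}(A)=\wtd T(A)$, and the identification of the oscillation notions under strict comparison) is handled correctly.
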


 Note  \cite[Theorem 1.1]{LinOZ} states that, under the same assumption, (3), (4) and (5) are equivalent.
 By  Corollary \ref{CM}, (2) and (3) are equivalent (see Definition \ref{DOT}).  By Theorem  \ref{PCuntz}, 
 (1) and (2) are equivalent. 
 So we  are able to add (1) and (2) to the statement of Theorem \ref{TM-2}.
 
 \vspace{0.2in}
 
 Let us end this paper with the following question:
 Let $A$ be a separable simple \CA\, with strict comparison and surjective $\Gamma.$
 Is $A$ tracially approximately  divisible (see \cite{FLL})? If, in addition, $A$ is  assumed to be amenable, is 
 $A$ ${\cal Z}$-stable?

\vspace{0.4in}

\noindent email: hlin@uoregon.edu
\end{document}